 \newtheorem{thm}{Theorem}[section]
 \newtheorem{cor}[thm]{Corollary}
 \newtheorem{lem}[thm]{Lemma}
 \theoremstyle{definition}
 \theoremstyle{remark}
 \newtheorem{ex}[thm]{Example}
 \numberwithin{equation}{section}
\begin{document}

%
%
%
%
%
%
%
%
%

\author[Youssef Aserrar  and  Elhoucien Elqorachi]{Youssef Aserrar  and  Elhoucien Elqorachi}

\address{%
	Ibn Zohr University, Faculty of sciences, 
Department of mathematics,\\
Agadir,
Morocco}

\email{youssefaserrar05@gmail.com, elqorachi@hotmail.com }

\subjclass{39B52, 39B32}

\keywords{Functional equation, semigroup, addition law, involutive automorphism.}

\date{January 1, 2020}
\title[Five trigonometric addition laws on semigroups]{Five trigonometric addition laws on semigroups}
 
\begin{abstract}
In this paper, we determine the complex-valued solutions of the following functional equations 
\[g(x\sigma (y)) = g(x)g(y)+f(x)f(y),\quad x,y\in S,\]
\[f(x\sigma (y)) = f(x)g(y)+f(y)g(x),\quad x,y\in S,\]
 \[f(x\sigma (y)) = f(x)g(y)+f(y)g(x)-g(x)g(y),\quad x,y\in S,\]
\[f(x\sigma(y))=f(x)g(y)+f(y)g(x)+\alpha g(x\sigma(y)),\quad x,y\in S,\]
\[f(x\sigma(y))=f(x)g(y)-f(y)g(x)+\alpha g(x\sigma(y)),\quad x,y\in S,\] 
where $S$ is a semigroup, $\alpha \in \mathbb{C}\backslash \lbrace 0\rbrace$ is a fixed constant and $\sigma :S\rightarrow S$ an involutive automorphism. 
\end{abstract}

\maketitle

\section{Introduction}
Let $S$ be a semigroup and $\sigma$ an involutive automorphism of $S$. That is $\sigma (xy)=\sigma(x)\sigma(y)$ and $\sigma(\sigma(x))=x$ for all $x,y\in S$.\\
The Cosine-Sine functional equation is
\begin{equation}
f(x\sigma (y)) = f(x)g(y)+f(y)g(x)+h(x)h(y),\quad x,y\in S,
\label{cos}
\end{equation}
where $f,g:S\rightarrow\mathbb{C}$.\\
This equation generalizes both the cosine subtraction formula
\begin{equation}
g(x\sigma (y)) = g(x)g(y)+f(x)f(y),\quad x,y\in S,
\label{E3}
\end{equation}
and the sine addition formula 
\begin{equation}
f(x\sigma (y)) = f(x)g(y)+f(y)g(x),\quad x,y\in S.
\label{E1}
\end{equation}
This paper extends previous results about the functional equation  
\begin{equation}
f(x\sigma (y)) = f(x)g(y)+f(y)g(x)-g(x)g(y),\quad x,y\in S,
\label{E2}
\end{equation}
for unknown functions $f,g:S\rightarrow\mathbb{C}$ on semigroups, where $\sigma$ is an involutive automorphism.\\
In a previous paper Ajebbar and Elqorachi \cite{Ajb} gave the general  solution of \eqref{E2} on semigroups generated by its squares, Stetk\ae r \cite{ST2} and Ebanks \cite{EB2,Ebanks} (with $\sigma =Id$) obtained the solutions on semigroups. Here we find the general solution for all semigroups and as an application we describe the solutions of the following functional equation 
\begin{equation}
f(x\sigma(y))=f(x)g(y)+f(y)g(x)+\alpha g(x\sigma(y)),\quad x,y\in S,
\label{App1}
\end{equation}
on semigroups where $\alpha \in \mathbb{C}\backslash \lbrace 0\rbrace$ is a fixed constant.\\
Stetk\ae r \cite[Theorem 3.1]{S} solved the functional equation 
\[f(xy)=f(x)f(y)-g(x)g(y)+\alpha g(xy),\quad x,y\in S,\]
on semigroups, and the solutions of the functional equation 
\begin{equation}
f(x\sigma(y))=f(x)g(y)-f(y)g(x)+\alpha g(x\sigma(y)),\quad x,y\in S,
\label{App2}
\end{equation}
were given by Zeglami et al. \cite[Proposition 4.1]{Z} on topological groups. We extend the results to semigroups, and we relate \eqref{App2} to the functional equation 
\[f(x\sigma(y))=f(x)g(y)-f(y)g(x),\quad x,y\in S,\]
which are solved by Ebanks \cite[Corollary 4.3]{EB1} on monoids, and by the authors \cite[Proposition 3.2]{Ase} on semigroups.\\
The functional equations \eqref{E3} and \eqref{E1} have been investigated by many authors, beginning with the case $S=\left(\mathbb{R},+ \right) $, $\sigma(x)=-x$. Equation \eqref{E3} with $\sigma =Id$ was solved on abelian groups by Vincze \cite{V}, and on general groups by Chung, Kannappan, and Ng \cite{Ch}.\\

The solutions of \eqref{E3} and \eqref{E1} are also described on topological groups by Poulsen and Stetk\ae r \cite{Pou}. Ajebbar and Elqorachi \cite{Ajb,Ajb2} give the solutions of \eqref{E3} and \eqref{E1} on semigroups generated by their squares. A more general description of the solutions of \eqref{E1} with $\sigma =Id$ was obtained recently by Ebanks \cite[Theorem 2.1]{EB1} and \cite[Theorem 3.1]{EB2} on a semigroup not necessarily generated by its squares. Ebanks \cite[Theorem 4.1]{EB1} gives the solution of \eqref{E3} on monoids.\\
Equation \eqref{cos} (with $\sigma=Id$) was solved by Chung, Kannappan, and Ng \cite{Ch} for the case that $S$ is a group. Their results was extended by Ajebbar and Elqorachi \cite{Ajb} to the case that $S$ is a semigroup generated by its squares. The solutions of the special case $h=ig$, $\sigma=Id$, namely 
\begin{equation}
f(xy)=f(x)g(y)+f(y)g(x)-g(x)g(y),\quad x,y\in S,
\label{cog}
\end{equation}
are described by Stetk\ae r \cite{ST2} in terms of multiplicative functions and solutions of the sine addition formula 
\[f(xy)=f(x)g(y)+f(y)g(x),\quad x,y\in S.\]
Using the result obtained by Stetk\ae r \cite{ST2}, Ebanks \cite{Ebanks} gave a description of the solutions of \eqref{cog} on a general semigroup.\\
The purpose of the present paper is to solve the functional equations \eqref{E3}, \eqref{E1}, \eqref{E2} and \eqref{App2} on a semigroup $S$, where $\sigma :S\rightarrow S$ is an involutive automorphism. As an application of \eqref{E2} we solve the functional equation \eqref{App1}.
\section{Our results}
The functional equations \eqref{E3} and \eqref{E1} have not been solved on general semigroups. The present paper does so. We derive explicit formulas for the solutions of \eqref{E3} and \eqref{E1} on semigroups in terms of multiplicative, and additive functions.\\
The complete solution of \eqref{E3} and \eqref{E1} is given respectively in sections 4 and 5. In section 6 we give the general solution of \eqref{E2} on semigroups and we apply the results to solve \eqref{App1} in section 7. The paper concludes with the solutions of \eqref{App2} in section 8 and some examples in section 9.
\section{Notations and terminology}
In order to build our results we recall the following notations and notions that will be used throughout the paper. Let $S$ be a semigroup, i.e a set equipped with an associative binary operation. A function $a$ on $S$
 is additive if $a(xy) = a(x) + a(y)$ for all $x, y\in S$. A  multiplicative function on $S$ is a function $\mu :S\rightarrow \mathbb{C}$ satisfying $\mu(xy) = \mu(x)\mu(y)$ for all $x, y \in S$.\\ 
A function $f:S\rightarrow \mathbb{C}$ is central if $f(xy) = f(yx)$ for all $x, y\in S$, and $f$ is abelian if $f$ is central and $f(xyz)=f(xzy)$ for all $x,y,z\in S$.\\
Let $S$ be a semigroup, $\sigma: S \rightarrow S$ an automorphism. For a subset $T\subseteq S$ we define $T^2:=\lbrace xy\quad \vert \quad x, y\in T\rbrace$.
If $\chi: S \rightarrow \mathbb{C}$ is a multiplicative function and $\chi \neq 0$, we define  the sets
 $$I_{\chi}:=\{x \in S \mid \chi(x)=0\},$$ 
  $$P_\chi : =\{p\in I_{\chi}\backslash I_{\chi}^2\quad \vert up, pv, upv\in I_{\chi}\backslash I_{\chi}^2\quad \text{for all}\quad u,v\in S\backslash I_{\chi}\}.$$   
  For any function $f :S \rightarrow \mathbb{C}$ , we define the functions $f^*=f\circ \sigma$, $f^e=\dfrac{f+f^*}{2}$ and $f^{\circ}=\dfrac{f-f^*}{2}$. We will say that $f$ is even if $f^*=f$ and $f$ is odd if $f^*=-f$. If $g, f: S \rightarrow \mathbb{C}$ are two functions we define the function $(f \otimes g)(x, y):=f(x) g(y)$, for all $x, y \in S$. For a topological semigroup $S$ let $C(S)$ denote the algebra of continuous functions from $S$ into $\mathbb{C}$.\\
  The following lemma will be used throughout the paper.
  \begin{lem}
  Let $\sigma :S\rightarrow S$ be an automorphism and $\chi :S\rightarrow \mathbb{C}$ be a multiplicative function such that $\chi^*=\chi$. Then \\
  $\sigma \left( I_{\chi}\right)= I_{\chi},\ \sigma \left(S\backslash I_{\chi} \right)=S\backslash I_{\chi} ,\ \sigma \left(I_{\chi}\backslash I_{\chi}^2 \right)=I_{\chi}\backslash I_{\chi}^2,\ \sigma \left(P_{\chi} \right)=P_{\chi} $ and $\sigma \left(I_{\chi}\backslash P_{\chi} \right)=I_{\chi}\backslash P_{\chi} $.
  \label{invar}
\end{lem}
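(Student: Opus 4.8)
The plan is to extract the one identity that the hypothesis $\chi^* = \chi$ actually gives, namely $\chi(\sigma(x)) = \chi(x)$ for all $x \in S$, and to combine it with the two structural facts about $\sigma$: that it is a bijective homomorphism, and that its inverse $\sigma^{-1}$ is again an automorphism satisfying the same relation $\chi \circ \sigma^{-1} = \chi$. Everything then reduces to transporting the various sets through $\sigma$ and, where needed, invoking $\sigma^{-1}$ for the reverse inclusion.

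First I would prove $\sigma(I_\chi) = I_\chi$: for every $x$ the equivalence $\chi(x) = 0 \iff \chi(\sigma(x)) = 0$ shows $x \in I_\chi \iff \sigma(x) \in I_\chi$, and surjectivity of $\sigma$ upgrades the inclusion to equality. The identity $\sigma(S \setminus I_\chi) = S \setminus I_\chi$ is then immediate from $\sigma(S) = S$ together with the complement rule $\sigma(A \setminus B) = \sigma(A) \setminus \sigma(B)$ valid for a bijection when $B \subseteq A$. For the third identity I would use that $\sigma$ is a homomorphism: from $\sigma(xy) = \sigma(x)\sigma(y)$ and $\sigma(I_\chi) = I_\chi$ one gets $\sigma(I_\chi^2) = I_\chi^2$, whence $\sigma(I_\chi \setminus I_\chi^2) = I_\chi \setminus I_\chi^2$ by the same complement rule applied inside $I_\chi$.

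The substantive step is $\sigma(P_\chi) = P_\chi$, and I expect the defining universal quantifier ``for all $u, v \in S \setminus I_\chi$'' to be the main obstacle. To show $\sigma(P_\chi) \subseteq P_\chi$, take $p \in P_\chi$; by the third identity $\sigma(p) \in I_\chi \setminus I_\chi^2$, so only the three membership conditions remain. Given arbitrary $u, v \in S \setminus I_\chi$, I would use the already-established surjectivity $\sigma(S \setminus I_\chi) = S \setminus I_\chi$ to write $u = \sigma(u')$ and $v = \sigma(v')$ with $u', v' \in S \setminus I_\chi$; the homomorphism property then converts the products into $u\sigma(p) = \sigma(u'p)$, $\sigma(p)v = \sigma(pv')$, and $u\sigma(p)v = \sigma(u'pv')$. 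Since $p \in P_\chi$ forces $u'p,\, pv',\, u'pv' \in I_\chi \setminus I_\chi^2$, applying $\sigma$ and the third identity places all three products in $I_\chi \setminus I_\chi^2$, so $\sigma(p) \in P_\chi$.

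Finally, rather than redo the computation for the reverse inclusion, I would apply the inclusion just proved to the automorphism $\sigma^{-1}$, which satisfies the same hypotheses; this gives $\sigma^{-1}(P_\chi) \subseteq P_\chi$, i.e. $P_\chi \subseteq \sigma(P_\chi)$, hence equality. The last identity $\sigma(I_\chi \setminus P_\chi) = I_\chi \setminus P_\chi$ then follows by combining $\sigma(I_\chi) = I_\chi$, $\sigma(P_\chi) = P_\chi$, the inclusion $P_\chi \subseteq I_\chi$, and once more the complement rule for the bijection $\sigma$.
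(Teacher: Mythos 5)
Your proposal is correct and takes exactly the route the paper intends: the paper itself gives no details, saying only ``simple and elementary considerations, using that $\sigma$ is a bijection'' and citing Ebanks, and your argument is precisely that elementary bijection-plus-homomorphism transport, worked out in full (including the pull-back of $u,v$ through $\sigma$ to handle the universal quantifier in $P_\chi$, and the clean $\sigma^{-1}$ trick for the reverse inclusion, which also covers the non-involutive case as stated in the lemma).
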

\begin{proof}
Simple and elementary considerations, using that $\sigma$ is a bijection. (See \cite[Lemma 4.1]{EB}).
\end{proof}
\section{Solutions of the cosine subtraction formula \eqref{E3}}
The most recent result on the cosine subtraction formula \eqref{E3} on monoids is \cite[Theorem 4.1]{EB2}. In this section we will solve \eqref{E3} on general semigroups.\\
The following lemma will be used later.
\begin{lem}
Let $f:S\rightarrow \mathbb{C}$ be a non-zero function satisfying
\begin{equation}
f(x\sigma(y))=\beta f(x)f(y),\quad\text{for all}\quad x,y\in S,
\label{M1}
\end{equation} 
where $\beta \in \mathbb{C}\backslash \lbrace 0\rbrace$ is a constant. Then there exists a non-zero multiplicative function $\chi:S\rightarrow \mathbb{C}$ such that $\beta f =\chi$ and $\chi^*=\chi$.
\label{M}
\end{lem}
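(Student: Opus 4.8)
The plan is to set $\chi := \beta f$ and verify directly that $\chi$ is multiplicative and even; the conclusion then follows since $\beta f = \chi$ by definition. Because $\beta \neq 0$ and $f \neq 0$, we automatically have $\chi \neq 0$, so the entire content of the lemma reduces to establishing the two identities $f(xy) = \beta f(x)f(y)$ (giving multiplicativity of $\chi$) and $f^* = f$ (giving $\chi^* = \chi$), both of which I would extract from \eqref{M1} by purely algebraic manipulation.

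The key step is to upgrade the $\sigma$-twisted relation \eqref{M1} to an honest multiplicative-type relation, using associativity together with the fact that $\sigma$ is an automorphism. Concretely, I would evaluate $f(x\sigma(yz))$ in two ways. On one hand, \eqref{M1} with $y$ replaced by $yz$ gives $f(x\sigma(yz)) = \beta f(x) f(yz)$. On the other hand, since $\sigma(yz) = \sigma(y)\sigma(z)$, I may regroup $x\sigma(yz) = \bigl(x\sigma(y)\bigr)\sigma(z)$ and apply \eqref{M1} twice to obtain $\beta f\bigl(x\sigma(y)\bigr) f(z) = \beta^2 f(x) f(y) f(z)$. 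Comparing the two expressions yields $f(x)f(yz) = \beta f(x) f(y) f(z)$ for all $x,y,z \in S$. Now I invoke the non-triviality of $f$: fixing some $x_0$ with $f(x_0)\neq 0$ and cancelling the common factor, this collapses to $f(yz) = \beta f(y)f(z)$ for all $y,z$, which is precisely $\chi(yz)=\chi(y)\chi(z)$.

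For evenness I would compare this new relation with the original one. Replacing $y$ by $\sigma(y)$ in $f(xy)=\beta f(x)f(y)$ and using that $\sigma$ is involutive gives $f(x\sigma(y)) = \beta f(x) f(\sigma(y)) = \beta f(x) f^*(y)$; comparing with \eqref{M1} and again cancelling $f(x_0)\neq 0$ forces $f^* = f$, whence $\chi^* = \beta f^* = \beta f = \chi$. All the computations are elementary; the only points requiring a moment's care are the regrouping $x\sigma(yz) = \bigl(x\sigma(y)\bigr)\sigma(z)$, where both the homomorphism property of $\sigma$ and associativity are used, and the two appeals to the non-vanishing of $f$ at a single point in order to cancel the common factor. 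There is no genuine structural obstacle here: the statement is a direct algebraic consequence of \eqref{M1}.
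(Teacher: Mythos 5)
Your proposal is correct and follows essentially the same route as the paper: both derive $\beta^2 f(x)f(y)f(z)=\beta f(x)f(yz)$ by evaluating $f(x\sigma(y)\sigma(z))=f(x\sigma(yz))$ in two ways via associativity and the automorphism property, cancel using $f\neq 0$ and $\beta\neq 0$ to get the untwisted relation $f(yz)=\beta f(y)f(z)$, and then obtain $f^*=f$ by comparing the twisted and untwisted relations. The only difference is cosmetic (you cancel at a fixed point $x_0$ with $f(x_0)\neq 0$, which the paper does implicitly).
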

\begin{proof}
By using the associativity of the semigroup operation we compute $f(x\sigma(y)\sigma(z))$ using the identity \eqref{M1} first as $f((x\sigma(y))\sigma(z))$ and then as $f(x(\sigma(y)\sigma(z)))$ and compare the results to obtain 
\begin{equation}
\beta^2 f(x)f(y)f(z)=\beta f(x)f(yz),\quad \text{for all}\quad x,y,z\in S.
\label{M2}
\end{equation}
Since $f\neq 0$ and $\beta \neq 0$ Equation \eqref{M2} can be written as 
\begin{equation}
f(yz)=\beta f(y)f(z), \quad \text{for all}\quad y,z\in S.
\label{M3}
\end{equation}
This implies that the function $\chi:=\beta f$ is multiplicative. On the other hand 
\begin{equation}
f(yz)=f(y\sigma(\sigma(z)))=\beta f(y)f^*(z),\quad \text{for all}\quad y,z\in S.
\label{M4}
\end{equation}
Since $f\neq 0$ and $\beta \neq 0$, we get by comparing equation \eqref{M3} and \eqref{M4} that $f^*=f$, then $\chi^*=\chi$. This completes the proof of Lemma \ref{M}.
\end{proof} 
The next result gives the general solution of \eqref{E3} on semigroups.
\begin{thm}
The solutions $f,g:S\rightarrow\mathbb{C}$ of the functional equation \eqref{E3} can be listed as follows :
\begin{enumerate}
\item[(1)] $g=0$ and $f=0$.
\item[(2)] $g$ is any non-zero function such that $g=0$ on $S^2$, and $f=cg$, where $c\in \lbrace i,-i\rbrace$.
\item[(3)] $g=\dfrac{1}{1+\alpha^2}\chi$ and $f=\dfrac{\alpha}{1+\alpha^2}\chi$, where $\alpha \in \mathbb{C}\backslash \lbrace i,-i\rbrace$ is a constant and $\chi :S\rightarrow\mathbb{C}$ is a non-zero multiplicative function such that $\chi^*=\chi$.
\item[(4)] $g=\dfrac{\delta ^{-1}\chi_1+\delta \chi_2}{\delta^{-1}+\delta}$ and $f=\dfrac{\chi_2-\chi_1}{\delta^{-1}+\delta}$, where $\delta \in \mathbb{C}\backslash \lbrace 0,i,-i\rbrace$  and $\chi_1,\chi_2:S\rightarrow\mathbb{C}$ are two different multiplicative functions such that $\chi_1^* = \chi_1$ and $\chi_2^*=\chi_2$.
\item[(5)] $$
\left\{\begin{array}{l}
f=-i\chi A\quad\text { on } \quad S \backslash I_{\chi} \\
f=0\quad\quad\quad\text { on } \quad I_\chi \backslash P_{\chi} \\
f=-i\rho\quad\quad\text { on } \quad P_\chi  
\end{array}\right.\quad \text{and}\quad g=\chi \pm if,$$
where $\chi:S\rightarrow\mathbb{C}$ is a non-zero  multiplicative function and $A:S\backslash I_{\chi}\rightarrow\mathbb{C}$ an additive function such that $\chi^*=\chi$, $A\circ \sigma =A$ and $\rho :P_{\chi}\rightarrow\mathbb{C}$ is an even function. In addition we have the following conditions :\\
(I): If $x\in\left\lbrace up,pv,upv \right\rbrace $ for $p\in P_{\chi}$ and $u,v\in S\backslash I_{\chi}$, then $x\in P_{\chi}$ and we have respectively $\rho(x)=\rho(p)\chi (u)$, $\rho(x)=\rho(p)\chi (v)$, or $\rho(x)=\rho(p)\chi (uv)$.\\
(II): $f(xy)=f(yx)=0$ for all $x\in I_{\chi}\backslash P_{\chi}$ and $y\in S\backslash I_{\chi}$.\\
Note that $f$ and $g$ are Abelian in each case.\par
Furthermore, if $S$ is a topological semigroup and $g\in C(S)$, then\\ $f,\chi,\chi_1,\chi_2\in C(S)$,  $A\in C(S\backslash I_{\chi})$ and $\rho \in C(P_{\chi})$.
\end{enumerate}
\label{TE3}
\end{thm}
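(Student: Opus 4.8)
The plan is to split the analysis according to whether the pair $\{f,g\}$ is linearly dependent or independent.

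First I would dispose of the \emph{linearly dependent} case. If $g=0$ then \eqref{E3} forces $f(x)f(y)=0$, so $f=0$ and we are in case (1). If $g\neq 0$ and $f=cg$ for a constant $c$, then \eqref{E3} reads $g(x\sigma(y))=(1+c^2)\,g(x)g(y)$. When $1+c^2\neq 0$, Lemma \ref{M} applied with $\beta=1+c^2$ produces a non-zero multiplicative $\chi$ with $\chi^*=\chi$ and $(1+c^2)g=\chi$, which is case (3) with $\alpha=c$. When $1+c^2=0$, i.e. $c=\pm i$, the equation gives $g(x\sigma(y))=0$; replacing $y$ by $\sigma(y)$ yields $g=0$ on $S^2$, and $f=\pm i g$, which is case (2).

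The substantive work is the \emph{linearly independent} case. First I would record the symmetry of the right-hand side of \eqref{E3}, which is invariant under $x\leftrightarrow y$, so $g(x\sigma(y))=g(y\sigma(x))$; after the substitution $x\mapsto\sigma(x)$ this gives the transposition relation $g^*(xy)=g(yx)$. Next, exploiting associativity, I would expand $g(x\sigma(y)\sigma(z))$ in two ways, as $g((x\sigma(y))\sigma(z))$ and as $g(x\sigma(yz))$, and substitute \eqref{E3}. This produces the key identity expressing $f(x\sigma(y))f(z)$ as a combination of $g(x)$ and $f(x)$ with coefficients depending only on $y,z$. Fixing a $z_0$ with $f(z_0)\neq 0$ then shows that, for every $y$, the translate $x\mapsto f(x\sigma(y))$ lies in $V:=\mathrm{span}\{g,f\}$; by \eqref{E3} the same holds for $x\mapsto g(x\sigma(y))$. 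Thus $V$ is a two-dimensional space stable under the translation operators $L_y\colon h\mapsto h(\,\cdot\,\sigma(y))$, and the $2\times 2$ matrices of $L_y|_V$ in the basis $\{g,f\}$ compose according to the semigroup law (up to the usual $\sigma$-twist). The dichotomy of the theorem is exactly the dichotomy for such a family: either it is diagonalizable, in which case $V$ is spanned by two multiplicative functions and, writing $\chi_1=g-\delta f$ and $\chi_2=g+\delta^{-1}f$ for a suitable $\delta$, one recovers case (4); or it is indecomposable, attached to a single multiplicative $\chi$ with a Jordan block, which forces $g\mp if=\chi$ and introduces an additive function $A$, giving case (5).

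The main obstacle is the indecomposable subcase (5), where $\chi$ has a non-trivial null set $I_\chi$ and the clean multiplicative description breaks down there. On $S\setminus I_\chi$ I would divide \eqref{E3} by $\chi$ and use the transposition relation to obtain $f=-i\chi A$ with $A$ additive, and, via Lemma \ref{invar} together with the symmetry relation, $\chi^*=\chi$ and $A\circ\sigma=A$. The delicate part is to control $f$ on $I_\chi$: one shows $f(xy)=f(yx)=0$ whenever one factor lies outside $I_\chi$ and the other lies in $I_\chi\setminus P_\chi$ (condition (II)), that $f$ vanishes on $I_\chi\setminus P_\chi$, and that on the indecomposable generators $P_\chi$ the function $f=-i\rho$ is governed by an even $\rho$ satisfying the compatibility conditions (I). This requires the combinatorics of $I_\chi$ versus $I_\chi^2$ encoded in $P_\chi$, precisely as in the analysis of \eqref{E1}, and Lemma \ref{invar} guarantees that all these sets are $\sigma$-invariant so that the evenness statements are consistent. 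Finally I would verify that $f$ and $g$ are abelian in every case, inherited from the abelianness of multiplicative and additive functions on the relevant domains, and deduce the continuity assertions by expressing $\chi,\chi_1,\chi_2,f,A,\rho$ through $g$ and $f$ via the identities already derived, so that $g\in C(S)$ propagates to all of the constituents.
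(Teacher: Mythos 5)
Your treatment of the degenerate and linearly dependent cases is correct and essentially identical to the paper's (cases (1)--(3), with case (2) arising from $c=\pm i$). The gap lies in the linearly independent case, and it is twofold. First, the phrase ``up to the usual $\sigma$-twist'' hides precisely the step on which everything else depends: to invoke any untwisted structure theorem (or your matrix picture) you must first remove $\sigma$ from \eqref{E3}, i.e.\ prove that $g^*=g$ and $f^*=\pm f$, and then dispose of the odd case. The paper does this by deriving, from the two associativity expansions and linear independence, the relations $g^*=g+c_1f$ and $f^*=c_2f$ with $c_2^2=1$, and then showing: (i) if $f^*=f$ then $c_1=0$, so \eqref{E3} becomes the untwisted equation $g(xy)=g(x)g(y)+f(x)f(y)$, to which \cite[Theorem 3.2]{EB1} and \cite[Theorem 3.1]{EB2} apply; (ii) if $f^*=-f$ and $c_1=0$, the two-character alternative is eliminated via \cite[Corollary 3.19]{ST1} (the constraints force $\chi_1=\chi_2^*$ and then $\chi_1=\chi_2$, a contradiction) while the remaining alternative collapses into case (4) with $\delta=1$; (iii) the case $f^*=-f$, $c_1\neq 0$ is contradictory. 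Without this analysis you cannot obtain the evenness conditions $\chi_1^*=\chi_1$, $\chi_2^*=\chi_2$, $\chi^*=\chi$, $A\circ\sigma=A$, $\rho$ even that appear in cases (4) and (5); indeed, nothing in your outline excludes a ``swapped'' pair with $\chi_2=\chi_1^*\neq\chi_1$, which one can check is not a solution of \eqref{E3}.

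Second, the dichotomy you assert for the translation operators $L_y|_V$ --- diagonalizable versus a single Jordan block --- is not a theorem on a general semigroup, and it cannot by itself produce case (5): the operators $L_y|_V$ need not be invertible, $\chi$ vanishes on the ideal $I_\chi$, and the component $\rho$ on $P_\chi$ (an essentially arbitrary even function subject only to condition (I)) is not a matrix coefficient of any two-dimensional system at all. That structure is exactly the content of Ebanks' theorems, and your deferral ``precisely as in the analysis of \eqref{E1}'' is circular within your own plan, since \eqref{E1} (Theorem \ref{P1}) is itself solved in the paper by the same pattern you are trying to establish: first the parity reduction you skipped, then an appeal to \cite[Theorem 3.1]{EB2}. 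As it stands, your proposal settles the dependent cases and otherwise points at the literature, but the two genuinely hard steps --- eliminating $\sigma$, and importing or proving the semigroup structure theorem with its $I_\chi$, $P_\chi$, $\rho$ apparatus --- are not carried out.
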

\begin{proof}
If $g=0$ then $f=0$. This is case (1). So from now on we assume that $g\neq 0$. Suppose that $g=0$ on $S^2$. For all $x,y\in S$ we have $x\sigma(y)\in S^2$, so we get from equation \eqref{E3} that 
\begin{equation}
g(x)g(y)+f(x)f(y)=0,
\label{A1}
\end{equation}
since $g\neq 0$ we obtain from equation \eqref{A1} that $f=cg$ where $c\in \mathbb{C}$ is a constant, then if we take this into account in equation \eqref{A1} we get \\$(c^2+1)g(x)g(y)=0$, this implies that $c^2+1=0$ because $g\neq 0$, so $c\in \lbrace i,-i\rbrace$. This occurs in part (2) of Theorem \ref{TE3}. If $f=0$, then equation \eqref{E3} can be written as follows $$g(x\sigma(y))=g(x)g(y),\quad x,y\in S.$$ According to Lemma \ref{M}, $g=:\chi$ is multiplicative and $\chi^*=\chi$. This occurs in part (3) of Theorem \ref{TE3} with $\alpha=0$.\\
 Now we assume that $g\neq 0$ on $S^2$, $f\neq 0$ and we  discuss two cases according to whether $f$ and  $g$ are linearly dependent or not.\\
\underline{First case :} $g$ and $f$ are linearly dependent. There exists a constant $\alpha \in \mathbb{C}$ such that $f=\alpha g$, so equation \eqref{E3} can be written as 
\begin{equation}
g(x\sigma(y))=(1+\alpha^2) g(x)g(y),\quad \text{for all}\quad x,y\in S.
\label{A3}
\end{equation}
Since $g\neq 0$ on $S^2$ and $f\neq 0$, we deduce from \eqref{A3} that $\alpha \notin \lbrace 0,i,-i\rbrace$, and then according to Lemma \ref{M}, $\chi:=(1+\alpha^2) g$ is multiplicative and $\chi^*=\chi$. This occurs in case (3) with $\alpha \neq 0$.\\
\underline{Second case :} $g$ and $f$ are linearly independent. By using the associativity of the semigroup operation we compute $g(x\sigma(y)\sigma(z))$ by the help of equation \eqref{E3} first as $g((x\sigma(y))\sigma(z))$ and then as $g(x(\sigma(y)\sigma(z)))$ and compare the results. We obtain after some rearrangement that
\begin{equation}
f(x)\left[f(yz)-f(y)g(z) \right]+g(x)\left[ g(yz)-g(y)g(z)\right]=f(z)f(x\sigma(y)).
\label{A4}  
\end{equation}
Since $f\neq 0$, there exists $z_0\in S$ such that $f(z_0)\neq 0$ and then 
\begin{equation}
f(x)h(y)+g(x)k(y)=f(x\sigma(y)),
\label{A5}
\end{equation}
where 
$$h(y)=\dfrac{f(yz_0)-f(y)g(z_0)}{f(z_0)},$$
and $$k(y)=\dfrac{g(yz_0)-g(y)g(z_0)}{f(z_0)}.$$
By using \eqref{E3} and $\sigma(\sigma(y))=y$ for all $y\in S$, $k(y)$ can be written as follows 
\begin{equation}
k(y)=c_1g(y)+c_2f(y),\quad\text{for all}\quad y\in S,
\label{A6}
\end{equation}
where $c_1=\dfrac{g(\sigma (z_0))-g(z_0)}{f(z_0)}$ and $c_2=\dfrac{f(\sigma(z_0))}{f(z_0)}$. Substituting  \eqref{A5} into \eqref{A4}, we obtain 
\begin{align}
\begin{split}
f(x)\left[f(yz)-f(y)g(z) \right]+g(x)\left[ g(yz)-g(y)g(z)\right]\\= f(x)f(z)h(y)+g(x)f(z)k(y).
\label{A7}
\end{split}
\end{align}
Since $f$ and $g$ are linearly independent we deduce from \eqref{A7} that
\begin{equation}
g(yz)=g(y)g(z)+f(z)k(y),
\label{A8}
\end{equation}
and 
\begin{equation}
f(yz)=f(y)g(z)+f(z)h(y).
\label{A9}
\end{equation}
Substituting \eqref{A6} into \eqref{A8}, we get
\begin{equation}
g(yz)=\left[ g(z)+c_1f(z)\right] g(y)+c_2f(z)f(y).
\label{A10}
\end{equation}
On the other hand, by applying \eqref{E3} to the pair $(y,\sigma (z))$ and taking into account that $\sigma(\sigma(z))=z$ for all $z\in S$, we obtain 
\begin{equation}
g(yz)=g(y)g^*(z)+f(y)f^*(z).
\label{A11}
\end{equation}
By comparing \eqref{A10} and \eqref{A11}, and using the linear independence of $f$ and $g$, we get 
\begin{equation}
g^*=g+c_1f,
\label{A12}
\end{equation}
\begin{equation}
f^*=c_2f.
\label{A13}
\end{equation}
Since $f\neq 0$, we deduce from \eqref{A13} that $c_2^2=1$. So either $f=f^*$ or $f=-f^*$.\\
\underline{Subcase A :} $f=f^*$. In view of \eqref{A12}, we have 
$$g=g^*+c_1f^*=g+c_1f+c_1f=g+2c_1f.$$
This implies that $2c_1f=0$, and then $c_1=0$ since $f\neq 0$. So $g^*=g$ and equation \eqref{E3} can be written as follows \begin{equation}
g(xy)=g(x)g(y)+f(x)f(y),\quad x,y\in S.
\label{A14}
\end{equation}
Defining $l:=if$, equation \eqref{A14} can be written as follows
$$g(xy)=g(x)g(y)-l(x)l(y),\quad x,y\in S.$$
According to \cite[Theorem 3.2]{EB1} and taking into account its improvement in \cite[Theorem 3.1]{EB2}, and that $f,g$ are linearly independent, we have the following possibilities :\\
(i) $g=\dfrac{\delta ^{-1}\chi_1+\delta \chi_2}{\delta^{-1}+\delta}$ and $l=\dfrac{\chi_1-\chi_2}{i(\delta^{-1}+\delta)}$, where $\delta \in \mathbb{C}\backslash \lbrace 0,i,-i\rbrace$ is a constant and $\chi_1,\chi_2:S\rightarrow\mathbb{C}$ are two multiplicative functions such that $\chi_1\neq \chi_2$. Since $g=g^*$, $f=f^*$ and $l=if$, we deduce that $f=\dfrac{\chi_2-\chi_1}{\delta^{-1}+\delta}$, $\chi_1=\chi_1^*$ and $\chi_2=\chi_2^*$. This is case (4).\\
(ii) $g=\chi \pm l$ and  $
\left\{\begin{array}{l}
l=\chi A\quad\text { on } \quad S \backslash I_{\chi} \\
l=0\quad\quad\text { on } \quad I_\chi \backslash P_{\chi} \\
l=\rho\quad\quad\text { on } \quad P_\chi 
\end{array}\right.
$, where $\chi:S\rightarrow\mathbb{C}$ is a non-zero  multiplicative function and $A:S\backslash I_{\chi}\rightarrow\mathbb{C}$ is an additive function and $\rho :P_{\chi}\rightarrow\mathbb{C}$ is a function with conditions (I)(for $\rho$) and (II)(for $l$) holding. This implies that 
$$
\left\{\begin{array}{l}
f=-i\chi A\quad\text { on } \quad S \backslash I_{\chi} \\
f=0\quad\quad\quad\text { on } \quad I_\chi \backslash P_{\chi} \\
f=-i\rho\quad\quad\text { on } \quad P_\chi  
\end{array}\right.\quad \text{and}\quad g=\chi \pm if.$$
Since $f^*=f$ and $g^*=g$, we see that  $\chi^*=\chi$, then by using Lemma \ref{invar} that $\rho \circ \sigma=\rho$ and $A\circ \sigma =A$. In addition $l=if$ implies that $f$ satisfies the condition (II). This occurs in part (5).\\
\underline{Subcase B :} $f^*=-f$.\\
\underline{Subcase B.1 :} $c_1=0$. So $g^*=g$ and equation \eqref{E3} can be written as follows
$$g(xy)=g(x)g(y)-f(x)f(y).$$
Similarly to the previous case, we get according to \cite[Theorem 3.2]{EB1} and taking into account its improvement in \cite[Theorem 3.1]{EB2} and that $f$ and $g$ are linearly independent, the two cases:\\
(i) $g=\dfrac{\delta ^{-1}\chi_1+\delta \chi_2}{\delta^{-1}+\delta}$ and $f=\dfrac{\chi_1-\chi_2}{i(\delta^{-1}+\delta)}$, where $\delta \in \mathbb{C}\backslash \lbrace 0,i,-i\rbrace$ is a constant and $\chi_1,\chi_2:S\rightarrow\mathbb{C}$ are two different multiplicative functions. Since $f^*=-f$ and $g^*=g$, we get 
\begin{equation}
\delta ^{-1}(\chi_1-\chi_1^*)+\delta (\chi_2-\chi_2^*)=0,
\label{Par1}
\end{equation}
\begin{equation}
\chi_1+\chi_1^*=\chi_2+\chi_2^*.
\label{Par2}
\end{equation}
Since $\chi_1\neq\chi_2$, we obtain by the help of \cite[Corollary 3.19]{ST1} that $\chi_1=\chi_2^*$. Then \eqref{Par1} reduces to $\chi_1=\chi_2$. This case does not occur.\\
(ii) $g=\chi \pm f$ and  $
\left\{\begin{array}{l}
f=\chi A\quad\text { on } \quad S \backslash I_{\chi} \\
f=0\quad\quad\text { on } \quad I_\chi \backslash P_{\chi} \\
f=\rho\quad\quad\text { on } \quad P_\chi  
\end{array},\right.
$ where $\chi:S\rightarrow\mathbb{C}$ is a non-zero multiplicative function, $A:S\backslash I_{\chi}\rightarrow\mathbb{C}$ is a non-zero additive function and $\rho :P_{\chi}\rightarrow\mathbb{C}$ is a non-zero function with conditions (I) and (II) holding. If $g=\chi+ f$ we obtain since $f^*=-f$ and $g^*=g$ that $g=\chi^*- f$. Adding and subtracting this from $g=\chi +f$ we get that 
\[g=\dfrac{\chi+\chi^*}{2}\quad\text{and}\quad f=\dfrac{\chi^*-\chi}{2}.\]
By assumption $f\neq 0$, so $\chi\neq \chi^*$. This shows that we deal with case 4 for $\delta =1$. Now if $g=\chi-f$, we show by the same way that $g=\dfrac{\chi+\chi^*}{2}$ and $f=\dfrac{\chi-\chi^*}{2}$ which occurs in case 4 with $\delta =1$.\\
\underline{Subcase B.2 :} $c_1\neq 0$. From \eqref{A12} we have $g^*(xy)=g(xy)+c_1f(xy)$ for all $x,y\in S$, and then by using \eqref{E3} and \eqref{A9}, we get the identity 
\begin{align*}
g^*(x)g(y)+f^*(x)f(y)=g(x)g^*(y)+f(x)f^*(y)\\
+c_1\left[f(x)g(y)+f(y)h(x) \right].
\end{align*}

Since $f^*=-f$,  $g^*=g+c_1f$, $c_1\neq 0$ and $f\neq 0$, the identity above reduces to $h=-g$. Then equation \eqref{A9} becomes
\begin{equation}
f(yz)=f(y)g(z)-f(z)g(y),\quad\text{for all}\quad y,z\in S.
\label{A15}
\end{equation}
Using the associativity of the semigroup operation and \eqref{E3}, we get from \eqref{A15} that 
\begin{align*}
f(x)g(yz)-g(x)f(yz)=f(x)\left[g(y)g(z)+f(y)f(z) \right]\\ -g(x)\left[f(y)g(z)+f(z)g^*(y) \right],
\end{align*}

and then by the linear independence of $f$ and $g$, we deduce that
\begin{equation}
f(yz)=f(y)g(z)+f(z)g^*(y).
\label{A16}
\end{equation}
Comparing \eqref{A15} and \eqref{A16}, and using the linear independence of $f$ and $g$, we get that $g^*=-g$, then \eqref{A12} becomes $-g=g+c_1f$, so $2g+c_1f=0$. This contradicts the fact that $f$ and $g$ are linearly independent, so this case does not occur.\par
For the converse it is easily checked that the forms (1), (2), (3), (4) and (5) satisfy \eqref{E3}.\par
Finally, suppose that $S$ is a topological semigroup and $g\in C(S)$. In case (1) there is nothing to prove. Now if $f\neq 0$, the continuity of $f$ follows easily from the continuity of $g$ and the functional equation \eqref{E3}: Choose $y_0\in S$ such that $f(y_0)\neq 0$, we get from \eqref{E3} that 
\[f(x)=\dfrac{g(x\sigma(y_0))-g(y_0)g(x)}{f(y_0)}\ \text{for}\ x\in S.\] 
The function $x\mapsto g(x\sigma(y_0))$ is continuous, since the right translation $x\mapsto x\sigma(y_0)$ from $S$ into $S$ is continuous.\\
In case (5), the functions $\rho$ and $\chi A$ inherits continuity from $f$ by restriction. So $A$ is continuous since $\chi$ is non-zero. In case (4) we get the continuity of $\chi_1$ and $\chi_2$ by the help of \cite[Theorem 3.18]{ST1}. This completes the proof of Theorem \ref{TE3}.
\end{proof}
\section{Solutions of the sine addition formula \eqref{E1}}
The solution of the sine subtraction formula 
\[g(x\sigma(y))=g(x)f(y)-g(y)f(x),\quad x,y\in S,\]
 on a general monoid was given recently by Ebanks \cite[Corollary 4.3]{EB2}. Here we find the solutions of the sine addition formula \eqref{E1} on semigroups.
\begin{thm}
The solutions $f,g : S\rightarrow \mathbb{C}$ of Equation \eqref{E1} are the following pairs
\begin{enumerate}
\item[(1)] $f=0$ and $g$ is arbitrary.
\item[(2)] $f$ is any non-zero function such that $f=0$ on $S^2$, while $g=0$.
\item[(3)] $f=\dfrac{1}{2\alpha}\chi$ and $g=\dfrac{1}{2}\chi$, where $\chi :S\rightarrow \mathbb{C}$ is a non-zero multiplicative function such that $\chi^*=\chi$ and $\alpha\in \mathbb{C}\backslash \lbrace 0\rbrace$.
\item[(4)] $f=c\left(\chi_1-\chi_2 \right) $ and $g=\dfrac{\chi_1+\chi_2}{2}$, where $\chi_1,\chi_2 :S\rightarrow \mathbb{C}$ are two different multiplicative functions such that $\chi_1^*=\chi_1$, $\chi_2^*=\chi_2$ and $c\in \mathbb{C}\backslash \lbrace 0\rbrace$.
\item[(5)] $$
\left\{\begin{array}{l}
f=\chi A \quad \text { on } \quad S \backslash I_{\chi} \\
f=0\quad\quad \text { on } \quad I_\chi \backslash P_{\chi} \\
f=\rho  \quad\quad \text { on } \quad P_\chi  
\end{array}\right.\quad \text{and}\quad g=\chi,
$$
where  $\chi: S \rightarrow \mathbb{C}$ is a non-zero multiplicative function and $A: S \backslash I_{\chi} \rightarrow \mathbb{C}$ is a non-zero additive function such that $\chi^{*}=\chi$, $A \circ \sigma=A$, and $\rho: P_{\chi} \rightarrow \mathbb{C}$ is an even function. In addition we have the following conditions : \newline
(I): If $x\in\left\lbrace up,pv,upv \right\rbrace $ for $p\in P_{\chi}$ and $u,v\in S\backslash I_{\chi}$, then $x\in P_{\chi}$ and we have respectively $\rho(x)=\rho(p)\chi (u)$, $\rho(x)=\rho(p)\chi (v)$, or $\rho(x)=\rho(p)\chi (uv)$.\\
(II): $f(xy)=f(yx)=0$ for all $x\in S\backslash I_{\chi}$ and $y\in I_{\chi}\backslash P_{\chi}$.\\
Note that, off the exceptional case (1) $f$ and $g$ are Abelian.\par
Furthermore, off the exceptional case (1),  if $S$ is a topological semigroup and $f\in C(S)$, then $g,\chi$,$\chi_1,\chi_2\in C(S)$,  $A\in C(S\backslash I_{\chi})$ and $\rho \in C(P_{\chi})$.
\end{enumerate}
\label{P1}
\end{thm}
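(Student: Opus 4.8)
The plan is to mirror the architecture of the proof of Theorem \ref{TE3}: strip off the degenerate families first, and then reduce the essential case to the already-known solution of the sine addition formula for $\sigma=\mathrm{Id}$. First I would dispose of the easy alternatives. If $f=0$, then \eqref{E1} holds for every $g$, which is case (1); so assume $f\neq0$. If $f=0$ on $S^2$, then $x\sigma(y)\in S^2$ gives $f(x)g(y)+f(y)g(x)=0$, and choosing $x_0$ with $f(x_0)\neq0$ forces $g=cf$; resubstituting yields $2cf(x)f(y)=0$, hence $c=0$ and $g=0$, which is case (2). From here on assume $f\neq0$ on $S^2$. If $f,g$ are linearly dependent, then $g\neq0$ (otherwise we fall back to the previous branch), so $g=cf$ with $c\neq0$ and \eqref{E1} becomes $f(x\sigma(y))=2cf(x)f(y)$; Lemma \ref{M} then produces a multiplicative $\chi:=2cf$ with $\chi^*=\chi$, which is case (3) with $\alpha=c$.

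The core is the linearly independent case, where I would reproduce the mechanism of Subcases A--B of Theorem \ref{TE3}. Replacing $y$ by $\sigma(y)$ in \eqref{E1} gives the \emph{companion identity}
\[f(xy)=f(x)g^*(y)+f^*(y)g(x),\quad x,y\in S.\]
Then, computing $f\big(x\sigma(y)\sigma(z)\big)$ by associativity in the two groupings $(x\sigma(y))\sigma(z)$ and $x\sigma(yz)$ and using \eqref{E1}, I obtain an identity which, after fixing $z_0$ with $f(z_0)\neq0$, expresses $g(x\sigma(y))$ as $f(x)\phi(y)+g(x)\psi(y)$. Substituting this back and invoking the linear independence of $f$ and $g$ yields multiplicative-type equations for $g$ and for $f$; pushing these through the involution and again using independence forces $\phi,\psi\in\operatorname{span}(f,g)$ and, ultimately, parity relations of the shape $f^*=c_2f$ with $c_2^2=1$ and $g^*=g+c_1f$. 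The decisive point is the even branch $f^*=f$: there one gets $c_1=0$, hence $g^*=g$, and the companion identity above collapses to $f(xy)=f(x)g(y)+f(y)g(x)$, i.e. \eqref{E1} reduces to the sine addition formula with $\sigma=\mathrm{Id}$.

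Once this reduction is in place I would quote the known description of that equation, \cite[Theorem 2.1]{EB1} together with its refinement \cite[Theorem 3.1]{EB2}: linear independence leaves exactly the two families $g=\tfrac12(\chi_1+\chi_2),\ f=c(\chi_1-\chi_2)$ and $g=\chi$ with $f=\chi A$ on $S\setminus I_\chi$, $f=0$ on $I_\chi\setminus P_\chi$, $f=\rho$ on $P_\chi$, which are precisely cases (4) and (5). The evenness $f^*=f,\ g^*=g$ transfers to $\chi_1^*=\chi_1,\chi_2^*=\chi_2$ and $\chi^*=\chi$, and Lemma \ref{invar} converts this into $A\circ\sigma=A$ and $\rho$ even, while conditions (I)--(II) are inherited verbatim; abelianness of $f,g$ comes for free since they are built from multiplicative and additive functions. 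The odd branch $f^*=-f$ should then be shown to produce nothing new: there the companion identity becomes a sine subtraction-type formula $f(xy)=f(x)g(y)-f(y)g(x)$, and combining it with \eqref{E1} leads, exactly as in Subcase B of the proof of Theorem \ref{TE3}, either to a contradiction with linear independence or to the boundary value $\delta=1$ already contained in case (4). The converse is a direct verification for each family. For the topological addendum, off case (1) I would solve for $g$ at a point where $f\neq0$ --- after the reduction $g(y)=\big(f(x_0y)-f(y)g(x_0)\big)/f(x_0)$, continuous because left translations are --- and then read off continuity of $\chi,\chi_1,\chi_2$ from \cite[Theorem 3.18]{ST1} and of $A,\rho$ by restriction.

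The step I expect to be the main obstacle is precisely the parity analysis in the linearly independent case: extracting $f^*=\pm f$ and $g^*=g+c_1f$ from the associativity identities and then rigorously excluding the odd branch. This is where the involution genuinely interacts with the multiplicative structure, and it is the bookkeeping-heavy analogue of Subcases A and B in the proof of Theorem \ref{TE3}; everything after it is either a citation to the $\sigma=\mathrm{Id}$ theory or a routine verification.
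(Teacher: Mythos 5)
Your plan follows the paper's own proof of Theorem \ref{P1} step for step: the degenerate cases (1)--(2), the linearly dependent case via Lemma \ref{M}, the associativity computation producing $g(x\sigma(y))=f(x)h(y)+g(x)k(y)$ together with the relations $f^*=c_2f$, $c_2^2=1$, and $g^*=g+c_1f$, the reduction of the even branch to the sine addition law with $\sigma=\mathrm{Id}$ solved in \cite[Theorem 3.1]{EB2}, and the same continuity argument. The architecture is right; the problem is a single but genuine gap in the odd branch, precisely at the step you yourself flag as the main obstacle.

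In the branch $f^*=-f$ you assert that the companion identity becomes $f(xy)=f(x)g(y)-f(y)g(x)$. In fact it reads $f(xy)=f(x)g^*(y)+f^*(y)g(x)=f(x)g(y)-f(y)g(x)+c_1f(x)f(y)$, so the assertion presupposes $c_1=0$ --- and your only argument for $c_1=0$ (apply $\sigma$ to $g^*=g+c_1f$, as in Subcase A of Theorem \ref{TE3}) works only in the even branch: when $f^*=-f$ it gives $g=g^*+c_1f^*=(g+c_1f)-c_1f=g$, i.e. no information. The paper closes exactly this hole \emph{before} splitting into parity cases: writing $f(x\sigma(y))=c_2^{-1}f^*(x\sigma(y))$, expanding $f^*(x\sigma(y))$ by applying \eqref{E1} at the pair $(\sigma(x),\sigma(y))$, and substituting $f^*=c_2f$ and $g^*=g+c_1f$, one obtains $f(x\sigma(y))=f(x\sigma(y))+2c_1f(x)f(y)$, forcing $c_1=0$ and hence $g^*=g$ in both branches. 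Without this computation, or an explicit substitute that handles $c_1\neq 0$, the odd branch is not excluded; the appeal to ``exactly as in Subcase B of Theorem \ref{TE3}'' is not such a substitute, since those calculations are carried out for \eqref{E3} and do not transfer verbatim to \eqref{E1}. Note also that once $g^*=g$ is secured, the odd branch ends in a pure contradiction: associativity together with $g(yz)=g(y)g(z)+f(z)h(y)$ yields $2g(x)g(y)=-f(x)h(y)-f(y)h(x)$, hence $h=af+bg$ and then $2g+bf=0$, contradicting independence. It never produces the boundary solution of case (4); that half of your disjunction is imported from the cosine proof and does not occur for \eqref{E1}.
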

\begin{proof}
If $f=0$ then $g$ will be arbitrary. This occurs in case (1). From now on we assume that $f\neq 0$. Suppose that $f=0$ on $S^2$. For all $x,y\in S$, we get from equation \eqref{E1} that 
\begin{equation}
f(x)g(y)+f(y)g(x)=0,
\label{B1}
\end{equation}
since $f\neq 0$ we obtain from equation \eqref{B1} that $g=cf$ where $c\in \mathbb{C}$ is a constant, then if we take this into account in equation \eqref{B1} we get $2cf(x)f(y)=0$, for all $x,y\in S$. This implies that $c=0$ because $f\neq 0$. This occurs in part (2) of Theorem \ref{P1}. Now we assume that $f\neq 0$ on $S^2$ and we  discuss two cases according to whether $f$ and  $g$ are linearly dependent or not.\\
\underline{First case :} $f$ and $g$ are linearly dependent. There exists a constant $\alpha \in \mathbb{C}$ such that $g=\alpha f$, so equation \eqref{E1} can be written as follows
\begin{equation}
f(x\sigma(y))=2\alpha f(x)f(y),\quad \text{for all}\quad x,y\in S.
\label{B2}
\end{equation}
Since $f\neq 0$ on $S^2$, then $\alpha \neq 0$. According to Lemma \ref{M}, the function $\chi:=2\alpha f$ is multiplicative and $\chi^*=\chi$. This is case (3).\\
\underline{Second case :} $f$ and $g$ are linearly independent. By using the associativity of the semigroup operation we compute $f(x\sigma(y)\sigma(z))$ using equation \eqref{E1} first as $f((x\sigma(y))\sigma(z))$ and then as $f(x(\sigma(y)\sigma(z)))$ and compare the results. We obtain after some rearrangement that
\begin{equation}
f(x)\left[g(yz)-g(y)g(z) \right]+g(x)\left[ f(yz)-f(y)g(z)\right]=f(z)g(x\sigma(y)).
\label{B6}  
\end{equation}
Since $f\neq 0$, there exists $z_0\in S$ such that $f(z_0)\neq 0$ and hence 
\begin{equation}
f(x)h(y)+g(x)k(y)=g(x\sigma(y)),\quad x,y\in S,
\label{B7}
\end{equation}
where 
$$h(y)=\dfrac{g(yz_0)-g(y)g(z_0)}{f(z_0)},$$
and $$k(y)=\dfrac{f(yz_0)-f(y)g(z_0)}{f(z_0)}.$$
By using \eqref{B7}, equation \eqref{B6} becomes
\begin{align}
\begin{split}
f(x)\left[g(yz)-g(y)g(z) \right]+g(x)\left[ f(yz)-f(y)g(z)\right]\\= f(x)f(z)h(y)+g(x)f(z)k(y),\quad x,y,z\in S.
\label{B8}
\end{split}
\end{align}
Since $f$ and $g$ are linearly independent we deduce from \eqref{B8} that
\begin{equation}
g(yz)=g(y)g(z)+f(z)h(y),
\label{D1}
\end{equation}
and 
\begin{equation}
f(yz)=f(y)g(z)+f(z)k(y),\quad \text{for all}\quad y,z\in S.
\label{B9}
\end{equation}
Equation \eqref{E1} implies that $f(yz_0)=f(y)g^*(z_0)+f^*(z_0)g(y)$ for all $y\in S$, so
\begin{equation}
k(y)=\alpha f(y)+\beta g(y),
\label{B10}
\end{equation}
where 
\[\alpha =\dfrac{g^*(z_0)-g(z_0)}{f(z_0)}\quad \text{and}\quad \beta =\dfrac{f^*(z_0)}{f(z_0)}.\]
By using \eqref{B10}, equation \eqref{B9} can be written as follows
$$f(yz)=\left(g(z)+\alpha f(z) \right)f(y)+\beta f(z)g(y),\quad y,z\in S.$$
On the other hand equation \eqref{E1} implies that 
$$f(yz)=f(y)g^*(z)+f^*(z)g(y),\quad y,z\in S.$$
By comparing these last two identities it follows from the linear independence of $f$ and $g$ that 
\begin{equation}
g^*=g+\alpha f
\label{B11}
\end{equation}
\begin{equation}
f^*=\beta f.
\label{B12}
\end{equation}
Since $f\neq 0$ we get from \eqref{B12} that $\beta \neq 0$ and $\beta ^2=1$.\\ 
On the other hand for all $x,y\in S$ we have 

\begin{align*}
f(x\sigma(y)) &= \beta^{-1}f^*(x\sigma(y))=\beta^{-1}f(\sigma(x)\sigma(\sigma(y))) \\
& = \beta^{-1}f^*(x)g^*(y)+\beta^{-1}f^*(y)g^*(x) \\
& = f(x)\left[g(y)+\alpha f(y) \right]+f(y)\left[g(x)+\alpha f(x) \right] \\
&=f(x)g(y)+f(y)g(x)+2\alpha f(x)f(y)\\
&=f(x\sigma(y))+2\alpha f(x)f(y). 
\end{align*}
So $\alpha=0$ since $f\neq 0$, and then $g^*=g$.\\
\underline{Subcase A :} $\beta =-1$. In this case $f^*=-f$ and equation \eqref{E1} becomes 
\begin{equation}
f(xy)=f(x)g(y)-f(y)g(x),\quad x,y\in S.
\label{B15}
\end{equation}
By using the associativity of the semigroup operation and taking into account equation \eqref{D1}, we compute $f(xyz)$ using equation \eqref{B15} first as $f(x(yz))$ and then as $f((xy)z)$ and compare the results. We obtain after some simplification that 
\begin{equation}
2g(x)g(y)=-f(x)h(y)-f(y)h(x),\quad x,y\in S.
\label{B16}
\end{equation}
Since $f\neq 0$, equation \eqref{B16} implies that $h=af+bg$ for some constants $a,b\in \mathbb{C}$, taking this into account in \eqref{B16} we obtain 
\begin{equation}
2g(x)g(y)=\left( -2af(y)-bg(y)\right) f(x)-bg(x)f(y).
\label{B17}
\end{equation}
By using the linear independence of $f$ and $g$, we deduce from \eqref{B17} that
\begin{equation}
2g+bf=0.
\label{B18}
\end{equation}
Equation \eqref{B18} contradicts the fact that $f$ and $g$ are linearly independent. This case does not occur.\\
\underline{Subcase B :} $\beta =1$. Since $f^*=f$ and equation \eqref{E1} becomes 
\begin{equation}
f(xy)=f(x)g(y)+f(y)g(x).
\end{equation}
Then according to \cite[Theorem 3.1]{EB2} and taking into account that $f\neq 0 $, $g\neq 0$, $f^*=f$ and $ g^*=g$ we have the following possibilities :\\
(i) $f=c\left( \chi _1 -\chi _2\right) $ and $g=\dfrac{\chi _1+\chi _2}{2}$, for some constant $c\in \mathbb{C}\backslash \lbrace 0\rbrace$ and $\chi_1, \chi _2 : S\rightarrow \mathbb{C}$ are two  multiplicative functions such that $\chi _1 \neq \chi _2$, $\chi_1^*=\chi_1$ and $\chi_2^*=\chi_2$. This is case (4).\\
(ii) $$
\left\{\begin{array}{l}
f=\chi A \quad \text { on } \quad S \backslash I_{\chi} \\
f=0 \quad\quad \text { on } \quad I_\chi \backslash P_{\chi} \\
f=\rho\quad \quad \text { on } \quad P_\chi  
\end{array}\right.\quad\text{and}\quad g=\chi,
$$
where  $\chi: S \rightarrow \mathbb{C}$ is a non-zero multiplicative function and $A: S \backslash I_{\chi} \rightarrow \mathbb{C}$ is a non-zero additive function such that $\chi^*=\chi, A\circ \sigma =A$, $\rho: P_{\chi} \rightarrow \mathbb{C}$ is an even function with conditions (I) and (II) holding. This occurs in part (5) of Theorem \ref{P1}.\par 
 Conversely we check by elementary computations that if $f, g$ have one of the forms (1)--(5) then $(f,g)$ is a solution of equation \eqref{E1}.\par
For the continuity statements, the continuity of $g$ follows easily from the continuity of $f$ and the functional equation \eqref{E1}. Let $y_0\in S$ such that $f(y_0)\neq 0$, in view of \eqref{E1} we have
\[g(x)=\dfrac{f(x\sigma(y_0))-g(y_0)f(x)}{f(y_0)},\quad x\in S.\]
The function $x\mapsto f(x\sigma(y_0))$ is continuous, since $S$ is a topological semigroup so that the right translation $x\mapsto x\sigma(y_0)$ from $S$ into $S$ is continuous.\\
In case (5), the functions $\rho$ and $\chi A$ inherits continuity from $f$ by restriction. So $A$ is continuous since $\chi$ is non-zero. In case (4) we get the continuity of $\chi_1$ and $\chi_2$ by the help of \cite[Theorem 3.18]{ST1}.
 This completes the proof of Theorem \ref{P1}.
\end{proof}
\section{Solutions of Equation \eqref{E2}}
In the two following lemmas, we give some key properties of the solutions of Equation \eqref{E2}.
\begin{lem}
Let $f, g : S\rightarrow \mathbb{C}$  be a solution of the functional equation \eqref{E2}. Then
\begin{enumerate}
\item[(1)] $f(xy)=f^*(yx)$, for all $x, y\in S$.
\item[(2)] $f(xyz)=f^*(xyz)$, for all $x, y, z\in S$.
\end{enumerate}
\label{Lem2}
\end{lem}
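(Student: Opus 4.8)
The plan is to prove both statements by exploiting associativity together with two applications of the defining equation \eqref{E2}, with no case analysis needed. For part~(1) the idea is to produce two expressions that must agree and read off the identity. First I would replace $y$ by $\sigma(y)$ in \eqref{E2} and use $\sigma(\sigma(y))=y$ to obtain
\[
f(xy)=f(x)g^{*}(y)+f^{*}(y)g(x)-g(x)g^{*}(y),\qquad x,y\in S.
\]
Next I would apply \eqref{E2} to the pair $(\sigma(y),x)$, which computes $f(\sigma(y)\sigma(x))=f^{*}(yx)$ as
\[
f^{*}(yx)=f^{*}(y)g(x)+f(x)g^{*}(y)-g^{*}(y)g(x),\qquad x,y\in S.
\]
The two right-hand sides are visibly identical, so $f(xy)=f^{*}(yx)$, which is part~(1).

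For part~(2) the key is to rewrite part~(1) in the equivalent flipped form $f^{*}(uv)=f(vu)$ (obtained simply by renaming the variables in~(1)) and then to combine the two forms with associativity. The plan is first to establish cyclic invariance of $f$ on triple products: grouping $xyz=(xy)z$ and applying part~(1) gives $f(xyz)=f^{*}(zxy)$, and then grouping $zxy=(zx)y$ and applying the flipped form gives $f^{*}(zxy)=f(yzx)$, so that $f(xyz)=f(yzx)$ for all $x,y,z\in S$; iterating yields $f(xyz)=f(yzx)=f(zxy)$. Finally I would compute, via the flipped form, $f^{*}(xyz)=f^{*}((xy)z)=f(z(xy))=f(zxy)$, and invoke the cyclic invariance just proved to conclude $f(zxy)=f(xyz)$; hence $f^{*}(xyz)=f(xyz)$, which is part~(2).

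There is no serious obstacle here; the only point requiring care is the bookkeeping with $\sigma$, namely keeping track of which argument carries the involution when passing between \eqref{E2}, its substituted form, and the flipped identity $f^{*}(uv)=f(vu)$. In particular one must check that part~(2) uses only the algebraic consequence~(1) together with associativity, so that the three groupings of $xyz$ line up into a genuine cyclic orbit; the equation \eqref{E2} itself is needed only to establish~(1).
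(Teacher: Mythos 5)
Your proposal is correct and follows essentially the same route as the paper: part (1) exploits the symmetry of the right-hand side of \eqref{E2} (your two substituted identities have visibly identical right-hand sides, which is exactly the paper's symmetry observation), and part (2) chains applications of (1) and its flipped form $f^*(uv)=f(vu)$ across the groupings of $xyz$, just as the paper's chain $f(xyz)=f^*(zxy)=f(yzx)=f^*(xyz)$ does, with your detour through cyclic invariance being only a cosmetic reorganization.
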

\begin{proof}
\begin{enumerate}
\item[(1)] The right hand side of the functional equation \eqref{E2} is invariant under the interchange of $x$ and $y$. So $f(x\sigma(y))=f(y\sigma(x))$ for all $x, y$\\$\in S$, then $f(xy)=f(x\sigma (\sigma(y))=f(\sigma(y)\sigma(x))=f^*(yx)$.
\item[(2)] For all $x, y, z\in S$ we have
$$f(xyz)=f^*(zxy)=f(yzx)=f^*(xyz).$$ This completes the proof of Lemma \ref{L1}.
\end{enumerate}
\end{proof}
\begin{lem}
Let $f, g : S\rightarrow \mathbb{C}$  be a solution of the functional equation \eqref{E2}. Then for all $x,y,z\in S$, the following statements hold:
\begin{enumerate}
\item[(1)] \begin{multline}
\left(f(x)-g(x) \right) \left(g(yz)-g(y)g(z) \right) =\left(f(z)-g(z) \right)g(x\sigma(y)) \\+g(x)\left(f(y)g(z)-f(yz) \right).
\label{L1} 
\end{multline}
\item[(2)] \begin{multline}
\left(f(x)-g(x) \right) \left(g(yz)-g(y)g(z) \right) =\left(f(z)-g(z) \right)g(x\sigma(y)) \\+g(x)g(y)\left(g^*(z)-f^*(z) \right)+g(x)f(y)\left(g(z)-g^*(z) \right) .
\label{L2} 
\end{multline}
\item[(3)] Suppose that $f$ and $g$ are linearly independent. Then
\begin{enumerate}
\item[(a)]there exists two functions $h, k : S\rightarrow \mathbb{C}$ such that 
\begin{equation}
g(x\sigma(y))=f(x)h(y)+g(x)k(y).
\label{L3}
\end{equation}
\item[(b)] there exists two constants $\alpha ,\beta \in \mathbb{C}$ such that 
$$h+k=\alpha g+\beta f,$$
\begin{equation}
g=(1-\beta )g^*+\beta f^*,
\label{L4}
\end{equation}
\begin{equation}
\alpha (f^*-g^*)=f-g.
\label{L5}
\end{equation}
\item[(c)] $\alpha \in \lbrace -1,1\rbrace$, and $\alpha =1$ implies that $\beta =0$, $g^*=g$ and $f^*=f$.
\end{enumerate} 

\end{enumerate}
\label{Lem1}
\end{lem}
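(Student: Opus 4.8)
The plan is to exploit associativity in the familiar way used for Theorems \ref{TE3} and \ref{P1}. For parts (1) and (2) I would compute $f(x\sigma(y)\sigma(z))$ in two ways, using that $\sigma$ is an automorphism so that $\sigma(y)\sigma(z)=\sigma(yz)$. Grouping as $f((x\sigma(y))\sigma(z))$ and applying \eqref{E2} twice expresses it through $g(x\sigma(y))$, through $f(x),g(x)$, and through the values at $y,z$; grouping as $f(x\sigma(yz))$ and applying \eqref{E2} once expresses it through $f(yz)$ and $g(yz)$. Equating the two and moving the terms carrying the factor $g(yz)-g(y)g(z)$ to the left should produce \eqref{L1} after routine rearrangement. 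For part (2) I would then replace $f(yz)$ in \eqref{L1} by the value obtained from applying \eqref{E2} to the pair $(y,\sigma(z))$, namely $f(yz)=f(y)g^*(z)+f^*(z)g(y)-g(y)g^*(z)$ (using $\sigma(\sigma(z))=z$); substituting this into the bracket $f(y)g(z)-f(yz)$ and simplifying yields exactly \eqref{L2}.

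For part (3) assume $f,g$ are linearly independent, so that $f-g\neq 0$ and I may fix $z_0$ with $f(z_0)-g(z_0)\neq 0$. Solving \eqref{L1} at $z=z_0$ for $g(x\sigma(y))$ and collecting the coefficients of $f(x)$ and $g(x)$ gives (3)(a), with $h(y)=\dfrac{g(yz_0)-g(y)g(z_0)}{f(z_0)-g(z_0)}$ and $k(y)$ the complementary linear combination. To obtain (3)(b) I would feed the representation \eqref{L3} back into \eqref{L1} and, since $f$ and $g$ are linearly independent as functions of $x$, separate the coefficients of $f(x)$ and of $g(x)$. This produces two identities; eliminating the common term $g(yz)-g(y)g(z)$ between them and again inserting $f(yz)=f(y)g^*(z)+f^*(z)g(y)-g(y)g^*(z)$ reduces everything to
\[
(f(z)-g(z))\,(h(y)+k(y))=f(y)(g^*(z)-g(z))+g(y)(f^*(z)-g^*(z)).
\]
Fixing a $z$ with $f(z)-g(z)\neq 0$ and dividing exhibits $h+k$ as a fixed linear combination $\alpha g+\beta f$, which is the first relation of (3)(b). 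Feeding this back into the displayed identity and now using the linear independence of $f,g$ in the variable $y$ forces, for every $z$, the two pointwise identities $\beta(f-g)=g^*-g$ and $\alpha(f-g)=f^*-g^*$. The first rearranges to $g^*=(1-\beta)g+\beta f$, which is \eqref{L4} after applying $\sigma$, and the second becomes \eqref{L5} after applying $\sigma$.

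Finally, for (3)(c), combining $\alpha(f-g)=f^*-g^*$ with its $\sigma$-image \eqref{L5} gives $\alpha^2(f-g)=\alpha(f^*-g^*)=f-g$; since $f-g\neq 0$ this forces $\alpha^2=1$, i.e. $\alpha\in\{-1,1\}$. When $\alpha=1$ I would use $\alpha(f-g)=f^*-g^*$ together with $g^*=(1-\beta)g+\beta f$ to write $f^*=(1+\beta)f-\beta g$, and then substitute both $f^*$ and $g^*$ into \eqref{L4}; the $f$- and $g$-coefficients collapse \eqref{L4} to $g=(1-2\beta)g+2\beta f$, whence $2\beta(f-g)=0$ and therefore $\beta=0$, after which $g^*=g$ and $f^*=f$ are immediate. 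I expect the main obstacle to be bookkeeping rather than any genuine difficulty: one must keep the two separations of variables (first in $x$, then in $y$) each rigorously justified by linear independence, and carry the $\sigma$-operation through the final elimination without sign slips; the remaining manipulations are routine algebra.
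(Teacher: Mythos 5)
Your proposal is correct and takes essentially the same route as the paper: associativity gives \eqref{L1}, substituting $f(yz)=f(y)g^{*}(z)+f^{*}(z)g(y)-g(y)g^{*}(z)$ gives \eqref{L2}, and part (3) follows from two separations of variables by linear independence (first in $x$, then in $y$) together with an application of $\sigma$. The only cosmetic difference is in (3)(b): you feed $h+k=\alpha g+\beta f$ back into the eliminated identity and apply $\sigma$ at the end, whereas the paper replaces $z$ by $\sigma(z)$ there and compares with \eqref{E2}; the two manipulations are equivalent.
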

\begin{proof}
(1) By using the associativity of the semigroup operation we compute $f(x\sigma(y)\sigma(z))$ with the help of equation \eqref{E2} first as $f((x\sigma(y))\sigma(z))$ and then as $f(x(\sigma(y)\sigma(z)))$ and compare the results. We obtain after some rearrangement the identity \eqref{L1}.\\
(2) From equation \eqref{E2} we get that 
\begin{equation} 
f(yz)=f(y\sigma(\sigma(z)))=f(y)g^*(z)+f^*(z)g(y)-g(y)g^*(z).
\label{L5}
\end{equation}
Taking into account equation \eqref{L5} in the identity \eqref{L1}, we obtain the identity \eqref{L2}.\\
(3)(a) Since $f$ and $g$ are linearly independent, then $f-g\neq 0$, so there exists $z_0\in S$ such that $f(z_0)-g(z_0)\neq 0$. Putting $z=z_0$ in \eqref{L1} we find that 
\[g(x\sigma(y))=f(x)h(y)+g(x)k(y),\]
for some functions $h$ and $k$. This is case (3)(a).\\
(b) Substituting the identity \eqref{L3} into \eqref{L2}, we get 
\begin{align*}
&f(x)\left[ g(yz)-g(y)g(z)\right]+g(x)\left[ g(y)g(z)-g(yz)\right]\\
&=f(x)\left[ (f(z)-g(z))h(y)\right]\\ &+g(x)\left[(f(z)-g(z))k(y)+g(y)(g^*(z)-f^*(z))+f(y)(g(z)-g^*(z)) \right].   
\end{align*}
By applying the linear independence of $f$ and $g$, we deduce that
\begin{equation}
g(yz)-g(y)g(z)=(f(z)-g(z))h(y),
\label{L6}
\end{equation}
\begin{align}
\begin{split}
&g(yz)-g(y)g(z)=(g(z)-f(z))k(y)\\
&+g(y)(f^*(z)-g^*(z))+f(y)(g^*(z)-g(z)).
\label{L7}
\end{split}
\end{align}
Then we deduce from \eqref{L6} and \eqref{L7} that
\begin{align}
\begin{split}
(f(z)-g(z))(h(y)+k(y))&=g(y)(f^*(z)-g^*(z))\\
&+f(y)(g^*(z)-g(z)).
\label{L8}
\end{split}
\end{align}
Since $f$ and $g$ are linearly independent, then $f-g\neq 0$, so there exists $z_0\in S$ such that $f(z_0)-g(z_0)\neq 0$. Putting $z=z_0$ in \eqref{L8} we find that 
\begin{equation}
h(y)+k(y)=\alpha g(y)+\beta f(y),
\label{v1}
\end{equation}
for some constants $\alpha ,\beta \in  \mathbb{C}$ which is the first part of (3)(b).\\
Sustituting \eqref{L3} into \eqref{L1}, we get
\begin{align*}
&f(x)\left[ g(yz)-g(y)g(z)\right]+g(x)\left[ g(y)g(z)-g(yz)\right]\\
&=f(x)\left[ (f(z)-g(z))h(y)\right]\\
&+g(x)\left[(f(z)-g(z))k(y)-f(yz)+f(y)g(z)\right].   
\end{align*}
By applying the linear independence of $f$ and $g$, we get
\begin{equation}
f(yz)=f(y)g(z)-g(y)g(z)+g(yz)+(f(z)-g(z))k(y).
\label{L10}
\end{equation}
Then we deduce from \eqref{L6} and \eqref{L10} that
\begin{equation}
f(yz)=f(y)g(z)+(f(z)-g(z))(k(y)+h(y)).
\label{L11}
\end{equation}
By using \eqref{v1}, equation \eqref{L11} becomes
$$f(yz)=f(y)g(z)+(f(z)-g(z))(\alpha g(y)+\beta f(y)).$$
Then we deduce that 
\begin{align}
\begin{split}
f(y\sigma(z))&=f(y)\left[(1-\beta) g^*(z)+\beta f^*(z) \right]\\
&+g(y)\left[\alpha (f^*(z)-g^*(z)) \right].
\label{L12}
\end{split}  
\end{align}
On the other hand, equation \eqref{E2} can be written as follows
\begin{equation}
f(y\sigma(z))=f(y)g(z)+g(y)\left[f(z)-g(z) \right]. 
\label{L13}
\end{equation}
By comparing \eqref{L12} and \eqref{L13} and using the linear independence of $f$ and $g$, we get  that $g=(1-\beta)g^*+\beta f^*$ and $\alpha (f^*-g^*)=f-g$. This is case (3)(b).\\
(c) According to (3)(b), $\alpha (f^*-g^*)=f-g$. This implies that $\alpha (f-g)=f^*-g^*$, then $\alpha ^2 (f^*-g^*)=f^*-g^*$. So $\alpha^2=1$ since $f\neq g$. In addition if $\alpha=1$ we have $f^*-g^*=f-g$, and then
\begin{equation}
g=(1-\beta)g^*+\beta f^*=g^*+\beta (f^*-g^*)=g^*+\beta (f-g).
\label{etoi1}
\end{equation}
This implies that 
\begin{equation}
g^*=g+\beta (f-g).
\label{etoi2}
\end{equation}
By adding \eqref{etoi1} to \eqref{etoi2} we find that
 \[g+g^*=g+g^*+2\beta (f-g),\]
 which implies that $2\beta (f-g)=0$. Since $f\neq g$ ($f$ and $g$ are linearly independent) it follows that $\beta =0$. Then $g=g^*$ and $f=f^*$. This is case (3)(c).
 This completes the proof of Lemma \ref{Lem1}.
\end{proof}
Now we are ready to solve the functional equation \eqref{E2}. Like in Stetk\ae r \cite{ST2} and Ebanks \cite{Ebanks} the solutions $\phi$ of a special instance of the sine addition law play an important role, and Stetk\ae r \cite[Theorem 3.3]{ST2} is Theorem \ref{Thm} with $\sigma =Id$.
\begin{thm}
The solutions $f, g : S\rightarrow \mathbb{C}$  of the functional equation \eqref{E2} can be listed as follows :
\begin{enumerate}
\item[(1)] $f=0$ and $g=0$.
\item[(2)] $f$ is any non-zero function such that $f=0$ on $S^2$, and $g=0$.
\item[(3)]  $f$ is any non-zero function such that $f=0$ on $S^2$, and $g=2f$.
\item[(4)] $f=\dfrac{\beta^2}{2\beta -1}\chi$ and $g=\beta\chi$, where $\chi$ is a non-zero even multiplicative function and $\beta  \in \mathbb{C}\backslash \left\lbrace  0, \dfrac{1}{2}\right\rbrace $ is a constant.
\item[(5)] There exist two different, non-zero even multiplicative functions $\chi_1, \chi_2:S\rightarrow\mathbb{C}$ and a constant $c_1\in \mathbb{C}\backslash\lbrace0,-1,1\rbrace$ such that
$$f=\dfrac{\chi_1+\chi_2}{2}+\dfrac{c_1^2+1}{2c_1}\dfrac{\chi_1-\chi_2}{2}\quad\text{and}\quad g=\dfrac{\chi_1+\chi_2}{2}+c_1\dfrac{\chi_1-\chi_2}{2}.$$
\item[(6)] There exist a non-zero even multiplicative function $\chi :S\rightarrow\mathbb{C}$ and a non-zero even function $\phi :S\rightarrow\mathbb{C}$ satisfying 
$$\phi(xy)=\phi(x)\chi(y)+\phi(y)\chi(x),$$
such that 
$$\left\{\begin{array}{l}
f=\dfrac{1}{2}\phi +\chi \\
g=\phi +\chi\end{array}.\right.$$
\item[(7)] There exist a non-zero even multiplicative function $\chi :S\rightarrow\mathbb{C}$ and a non-zero even function $\phi :S\rightarrow\mathbb{C}$ satisfying 
$$\phi(xy)=\phi(x)\chi(y)+\phi(y)\chi(x),$$
such that 
$$\left\{\begin{array}{l}
f=\phi +\chi \\
g=\chi
\end{array}.\right.$$
\item[(8)] There exist a non-zero multiplicative function $\chi :S\rightarrow\mathbb{C}$ such that $\chi\neq \chi^*$, $f=\dfrac{\chi+\chi^*}{2}$, and $g\in \left\lbrace \chi , \chi^*\right\rbrace$.\\
Note that $f$ and $g$ are Abelian in each case.\par
Furthermore, if $S$ is a topological semigroup and $f,g\in C(S)$, then \\$\chi,\phi,\chi_1,\chi_2,\chi^*\in C(S)$.
\end{enumerate}
\label{Thm}
\end{thm}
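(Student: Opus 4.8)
The plan is to peel off the degenerate solutions first, then treat the linearly dependent case by Lemma~\ref{M}, and finally reduce the linearly independent case to results already available through Lemma~\ref{Lem1}.

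I would begin by disposing of the easy cases. If $f=0$, then \eqref{E2} gives $g(x)g(y)=0$ for all $x,y\in S$, so $g=0$, which is case (1). Assume $f\neq0$. If $f=0$ on $S^2$, then since $x\sigma(y)\in S^2$ the equation reduces to $f(x)g(y)+f(y)g(x)=g(x)g(y)$ on all of $S$; fixing a $y$ with $g(y)\neq0$ shows $f$ is a scalar multiple of $g$, and resubstituting forces either $g=0$ (case (2)) or the scalar to be $\frac{1}{2}$, i.e. $g=2f$ (case (3)). From here on I assume $f\neq0$ on $S^2$. If $f$ and $g$ are then linearly dependent I would write $g=\lambda f$ and substitute into \eqref{E2} to obtain $f(x\sigma(y))=(2\lambda-\lambda^2)f(x)f(y)$. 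Since $f\neq0$ on $S^2$ the constant $2\lambda-\lambda^2$ is nonzero, so Lemma~\ref{M} yields an even multiplicative function $\chi=(2\lambda-\lambda^2)f$; writing $\beta=\frac{1}{2-\lambda}$ puts this in the form of case (4).

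The heart of the proof is the linearly independent case, where I would invoke Lemma~\ref{Lem1} to produce $h,k$ with $g(x\sigma(y))=f(x)h(y)+g(x)k(y)$ together with constants $\alpha\in\{-1,1\}$ and $\beta$ satisfying $g=(1-\beta)g^*+\beta f^*$ and $\alpha(f^*-g^*)=f-g$. When $\alpha=1$, Lemma~\ref{Lem1}(3)(c) gives $\beta=0$, $f^*=f$ and $g^*=g$; replacing $y$ by $\sigma(y)$ in \eqref{E2} and using evenness then collapses it to the $\sigma=\mathrm{Id}$ equation \eqref{cog}. At this point I would quote the description of the solutions of \eqref{cog} on semigroups due to Stetk\ae r~\cite{ST2} and Ebanks~\cite{Ebanks} (which is precisely Theorem~\ref{Thm} for $\sigma=\mathrm{Id}$), discard the branches violating linear independence or the hypotheses $f\neq0$, $g\neq0$, and impose the evenness $\chi^*=\chi$, $\phi^*=\phi$ forced by $f^*=f$, $g^*=g$; the survivors are exactly cases (5), (6), (7), with $\phi$ a solution of the sine addition law $\phi(xy)=\phi(x)\chi(y)+\phi(y)\chi(x)$ as described by Theorem~\ref{P1}. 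When $\alpha=-1$, the same two relations give $f+f^*=g+g^*$ and $g-g^*=\beta(g-f)$, and the objective is to deduce that $g$ is multiplicative with $g\neq g^*$ and $f=\frac{1}{2}(g+g^*)$, which is case (8).

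Finally, the converse is a routine verification for each of the eight forms, abelianness holds in every case because $f$ and $g$ are assembled from (even) multiplicative functions and sine-addition solutions, and the continuity transfer from $f,g\in C(S)$ to $\chi,\phi,\chi_1,\chi_2,\chi^*$ follows from \cite[Theorem 3.18]{ST1} exactly as in Theorems~\ref{TE3} and~\ref{P1}. I expect the $\alpha=-1$ analysis to be the main obstacle: one must extract genuine multiplicativity of $g$ (or $g^*$) and the identity $f=\frac{1}{2}(g+g^*)$ from the relations of Lemma~\ref{Lem1}, rather than stopping at the weaker symmetry $f+f^*=g+g^*$, and this is where the careful manipulation of $h$ and $k$ via \eqref{L6} and the induced sine-type identity for $f$ will be needed.
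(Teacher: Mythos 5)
Your handling of cases (1)--(7) is essentially the paper's own route: the degenerate cases, Lemma \ref{M} for the linearly dependent pair (your substitution $\beta=\frac{1}{2-\lambda}$ is exactly the paper's reparametrization), and, for $\alpha=1$, Lemma \ref{Lem1}(3)(c) followed by reduction to the $\sigma=\mathrm{Id}$ equation \eqref{cog} and citation of the Stetk\ae r--Ebanks description with evenness imposed. (Your direct argument in the case $f=0$ on $S^2$, replacing the paper's citation of Stetk\ae r's Lemma 3.1, is correct.)

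The genuine gap is the subcase $\alpha=-1$, which you yourself call ``the main obstacle'' and then do not prove. Announcing that ``the objective is to deduce that $g$ is multiplicative with $g\neq g^*$ and $f=\frac12(g+g^*)$'' is not an argument, and the route you gesture at (manipulating $h$ and $k$ via \eqref{L6}) is not what closes this case in the paper. The missing ingredients are concrete: (i) the parity identities of Lemma \ref{Lem2}, namely $f(xy)=f^*(yx)$ and $f(xyz)=f^*(xyz)$, which your proposal never invokes; combined with the two expansions \eqref{S2}--\eqref{S3} and an even/odd-in-$x$ comparison, they yield $(f-f^*)\cdot g^e(yz)=0$, after which the branch $g^e=0$ on $S^2$ is ruled out by a contradiction with linear independence, forcing $f^*=f$; (ii) adding \eqref{S4} and \eqref{S5} and using $g^e=f^e=f$ together with centrality of $f$ produces the d'Alembert-type equation $f(xy)+f(\sigma(y)x)=2f(x)f(y)$, which is resolved by appealing to Stetk\ae r's variant of d'Alembert's equation \cite[Theorem 2.1]{ST} to get $f=\frac{\chi+\chi^*}{2}$ --- an external result essential here and absent from your plan; (iii) finally $g$ is determined by writing $g=g^e+g^{\circ}$, deducing $g^{\circ}(x)g^{\circ}(y)=\frac{(\chi-\chi^*)(x)}{2}\cdot\frac{(\chi-\chi^*)(y)}{2}$, hence $g^{\circ}=c(\chi-\chi^*)$, and substituting back into \eqref{E2} to pin $c=\pm\frac12$, i.e. $g\in\{\chi,\chi^*\}$. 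Since case (8) is precisely the part of the theorem not reducible to previously published results, leaving it at the level of a stated objective means the proof is incomplete where it matters most.
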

\begin{proof}
If $f=0$, it is easy to see that $g=0$. This is case (1), so from now on we assume that $f\neq 0$. Suppose that $f=0$ on $S^2$, then according to \cite[Lemma 3.1]{ST2} we get that $g=0$ or $g=2f$. This occurs in parts (2) and (3). In the rest of the proof we assume that $f\neq 0$ on $S^2$ and we  discuss two cases according to whether $f$ and  $g$ are linearly dependent or not.\\
\underline{First case :} $f$ and $g$ are linearly dependent. There exists a constant $\delta  \in \mathbb{C}$ such that $g=\delta f$. Equation \eqref{E2} can be written as 
\begin{equation}
f(x\sigma (y))=\left(2\delta -\delta^2 \right) f(x)f(y).
\label{T1}
\end{equation}
Since $f\neq 0$ on $S^2$, then $\delta  \in \mathbb{C}\backslash \lbrace 0, 2\rbrace$. According to Lemma \ref{M}, the function $\chi :=\left(2\delta -\delta^2 \right) f$ is multiplicative and $\chi^*=\chi$, so $f=\dfrac{1}{2\delta -\delta^2}\chi$ and $g=\dfrac{\delta}{2\delta-\delta^2}\chi$. If we put $\beta=\dfrac{\delta}{2\delta-\delta^2}$, then \\
\[f=\dfrac{\beta^2}{2\beta-1}\quad\text{and}\quad g=\beta\chi,\]
such that $\beta \in \mathbb{C}\backslash \left\lbrace 0,\dfrac{1}{2}\right\rbrace $. This is case (4).\\
\underline{Second case :} $f$ and $g$ are linearly independent. According to Lemma \ref{Lem1} (3)(b) and (c), there exists two constants $\alpha ,\beta \in \mathbb{C}$ such that 
\begin{equation}
g=(1-\beta)g^*+\beta f^*\quad \text{and}\quad \alpha (f^*-g^*)=f-g,
\label{S1}
\end{equation}
 and $\alpha \in \lbrace -1, 1\rbrace$.\\
\underline{Subcase A:} $\alpha =1$. According to Lemma \ref{Lem1} (3)(c) $\beta=0$, $f^*=f$ and $g^*=g$.\\
 So equation \eqref{E2} can be written as $$f(xy)=f(x)g(y)+f(y)g(x)-g(x)g(y).$$
Then according to \cite[Proposition 3.1]{Ebanks} and taking into account that  $f$ and $g$ are linearly independent, we have the following possibilities:\\
(i) $f=\dfrac{\chi_1+\chi_2}{2}+\dfrac{c_1^2+1}{2c_1}\dfrac{\chi_1-\chi_2}{2}$ and $g=\dfrac{\chi_1+\chi_2}{2}+c_1\dfrac{\chi_1-\chi_2}{2}$, where $\chi_1, \chi_2:S\rightarrow \mathbb{C}$ are two different non-zero multiplicative functions and $c_1 \in \mathbb{C}\backslash \lbrace 0\rbrace$ is a constant. Since $f\neq g$, then $c_1 \notin \lbrace -1,1\rbrace$. $f=f^*$ and $g=g^*$ implies that $\chi_1=\chi_1^*$ and $\chi_2=\chi_2^*$. This is case (5).\\
(ii) There exist a non-zero multiplicative function $\chi :S\rightarrow\mathbb{C}$ and a non-zero function $\phi :S\rightarrow\mathbb{C}$ satisfying 
$$\phi(xy)=\phi(x)\chi(y)+\phi(y)\chi(x),$$
such that 
$$\left\{\begin{array}{l}
f=\dfrac{1}{2}\phi +\chi, \\
g=\phi +\chi.\end{array}\right.\quad \text{or}\quad\left\{\begin{array}{l}
f=\phi +\chi, \\
g=\chi.
\end{array}\right.$$
Since $f=f^*$ and $g=g^*$, then $\chi=\chi^*$ and $\phi=\phi^*$. This occurs in parts (6) and (7).\\

\underline{Subcase B:} $\alpha =-1$. In this case, we get $f^*-g^*=g-f$, so $f^e=g^e$. On the other hand equation \eqref{E2} implies that
\begin{equation}
f(xy)=f(x)g^*(y)+g(x)\left( g(y)-f(y)\right),
\label{S2} 
\end{equation}
\begin{equation}
f^*(xy)=f^*(x)g(y)+g^*(x)\left(f(y)-g(y) \right).
\label{S3} 
\end{equation}
According to Lemma \ref{Lem2}(2), $f(xyz)=f^*(xyz)$ for all $x,y,z\in S$. Replacing $y$ by $yz$ in \eqref{S2} and \eqref{S3}, we obtain
\begin{equation}
\left(g(yz)-f(yz) \right)\left(g(x)+g^*(x) \right) =f^*(x)g(yz)-f(x)g^*(yz).
\label{qor1}
\end{equation}

In the identity \eqref{qor1}, the left hand side is an even function of $x$, so
\[f^*(x)g(yz)-f(x)g^*(yz)=f(x)g(yz)-f^*(x)g^*(yz),\]
this implies that $f(x)g^e(yz)=f^*(x)g^e(yz)$. If $g^e=0$ on $S^2$, then $f^e=0$ on $S^2$, that is $f(xy)=-f^*(xy)$ for all $x,y\in S$, so by using \eqref{S2} and \eqref{S3} we get
\begin{equation}
f^*(x)g(y)+f(x)g^*(y)=2\left[f(y)-g(y) \right]g^{\circ}(x).
\label{qor2}
\end{equation}

In the identity \eqref{qor2}, the right hand side is an odd function of $x$, so
\[f^*(x)g(y)+f(x)g^*(y)=-f(x)g(y)-f^*(x)g^*(y).\]
This implies that $f^*(x)g^e(y)=-f(x)g^e(y)$, so that $2f^e(x)g^e(y)=0$. Since $f^e=g^e$, we deduce that $f^e=0$. So $f^*=-f$ and $g^*=-g$, and so we can deduce from \eqref{S1} that $(2-\beta)g+\beta f=0$. This is a contradiction, since $f$ and $g$ are linearly independent. So $g^e\neq 0$ on $S^2$ and $f=f^*$. Now \eqref{S3} becomes 
\begin{equation}
f(xy)=f(x)g(y)+g^*(x)\left(f(y)-g(y) \right).
\label{S4}
\end{equation}
If we replace $y$ by $\sigma(y)$ in \eqref{S4}, we get
\begin{equation}
f(x\sigma(y))=f(x)g^*(y)+g^*(x)\left(g(y)-f(y) \right).
\label{S5}
\end{equation}
By adding \eqref{S4} to \eqref{S5}, we get
\begin{equation}
f(xy)+f(x\sigma(y))=2f(x)g^e(y).
\label{S6}
\end{equation}
Since $f=f^*$, then according to Lemma \ref{Lem2}(1), f is central. On the other hand $g^e=f^e=f$, so equation \eqref{S6} can be written as 
\begin{equation}
f(xy)+f(\sigma(y)x)=2f(x)f(y).
\end{equation}
According to \cite[Theorem 2.1]{ST}, there exists a non-zero multiplicative function $\chi:S\rightarrow\mathbb{C}$ such that \begin{equation}
f=\dfrac{\chi+\chi^*}{2}.
\label{F}
\end{equation}
Since $f=g^e$, $g^*=g^e-g^{\circ}$ and $g=g^e+g^{\circ}$ equation \eqref{S2} can be written as 
\[g^e(xy)=g^e(x)\left(g^e(y)-g^{\circ}(y) \right)+g^{\circ}(y)\left(g^e(x)+g^{\circ}(x) \right).\]
This implies that 
\begin{equation}
g^e(xy)=g^e(x)g^e(y)+g^{\circ}(x)g^{\circ}(y).
\label{S7}
\end{equation}
 By substituting $g^e=f$ from \eqref{F} into \eqref{S7} we find that
\[g^{\circ}(x)g^{\circ}(y)=\dfrac{\chi(x)-\chi^*(x)}{2}\dfrac{\chi(y)-\chi^*(y)}{2},\]
which implies that $g^{\circ}=c(\chi-\chi^*)$ for some constant $c\in \mathbb{C}$. Then 
\begin{equation}
g=g^e+g^{\circ}=\left(c +\dfrac{1}{2} \right)\chi+ \left(\dfrac{1}{2}-c \right)\chi^*.
\label{G}
\end{equation}
By substituting \eqref{F} and \eqref{G} into equation \eqref{E2} we get after some simplification that
\[\left(\dfrac{1}{4}-c^2 \right)\left[\chi(x)-\chi^*(x) \right]\left[\chi(y)-\chi^*(y) \right]=0.\]
Since $\chi\neq \chi^*$, we deduce that $\dfrac{1}{4}-c^2=0$. That is $c \in \left\lbrace  \dfrac{1}{2}, \dfrac{-1}{2}\right\rbrace $. So\\
$g\in \left\lbrace \chi , \chi^*\right\rbrace$.
This is case (8).\par
 Conversely we check by elementary computations that if $f, g$ have one of the forms (1)--(8) then $(f,g)$ is a solution of equation \eqref{E2}.\par 
 For the topological statements the cases (1), (2), (3) and (4) are trivial. In the cases (5), (6) and (7) we get the continuity of $\chi_1,\chi_2$ and $\chi$  by proceeding exactly as in the proof of \cite[Corollary 3.2]{Ebanks}, and then $\phi$ is continuous since $\phi =f-\chi$ or $\phi=2(f-\chi)$ and $f$ is continuous. For the case (8) we get the continuity of $\chi$ and $\chi^*$ by the help of \cite[Theorem 3.18]{ST2}.  This completes the proof of Theorem \ref{Thm}.
\end{proof}
The following corollary follows directly from Theorem \ref{Thm} and \cite[Theorem 3.1]{EB2}.
\begin{cor}
Let $S$ be a semigroup. The solutions $f,g :S\rightarrow\mathbb{C}$ of \eqref{E2} are the following pairs :
\begin{enumerate}
\item[(1)] $f=0$ and $g=0$.
\item[(2)] $f$ is any non-zero function such that $f=0$ on $S^2$, while $g=0$.
\item[(3)]  $f$ is any non-zero function such that $f=0$ on $S^2$, while $g=2f$.
\item[(4)] $f=\dfrac{\beta^2}{2\beta -1}\chi$ and $g=\beta\chi$, where $\chi$ is a non-zero even multiplicative function and $\beta  \in \mathbb{C}\backslash \left\lbrace  0, \dfrac{1}{2}\right\rbrace $ is a constant.
\item[(5)] There exist two different, non-zero even multiplicative functions $\chi_1, \chi_2:S\rightarrow\mathbb{C}$ and a constant $c_1\in \mathbb{C}\backslash\lbrace0,-1,1\rbrace$ such that
$$f=\dfrac{\chi_1+\chi_2}{2}+\dfrac{c_1^2+1}{2c_1}\dfrac{\chi_1-\chi_2}{2}\quad\text{and}\quad g=\dfrac{\chi_1+\chi_2}{2}+c_1\dfrac{\chi_1-\chi_2}{2}.$$
\item[(6)] 
$$\left\{\begin{array}{l}
f=\dfrac{1}{2}\chi A +\chi \quad\text{on}\quad S\backslash I_{\chi}\\
f=0\quad\quad\quad\quad\ \text{on}\quad I_{\chi}\backslash P_{\chi}\\
f=\dfrac{1}{2}\rho \quad\quad\quad\ \text{on}\quad P_{\chi}\end{array}\right.\quad \text{and}\quad \left\{\begin{array}{l}
 g=\chi +\chi A \quad\ \text{on}\quad S\backslash I_{\chi}\\
 g=0 \quad\quad\quad\quad\text{on}\quad I_{\chi}\backslash P_{\chi}\\
g=\rho \quad\quad\quad\quad\text{on}\quad P_{\chi}\end{array},\right.$$
where  $\chi: S \rightarrow \mathbb{C}$ is a non-zero multiplicative function and $A: S \backslash I_{\chi} \rightarrow \mathbb{C}$ is a non-zero additive function such that $\chi^*=\chi, A\circ \sigma =A$, $\rho: P_{\chi} \rightarrow \mathbb{C}$ is an even function with conditions (I) and (II) holding.
\item[(7)]
$$\left\{\begin{array}{l}
f=\chi A +\chi \quad\text{on}\quad S\backslash I_{\chi}\\
f=0\quad\quad\quad\ \ \text{on}\quad I_{\chi}\backslash P_{\chi}\\
f=\rho\quad\quad\quad\ \ \text{on}\quad P_{\chi}\end{array}\right.\quad\text{and}\quad g=\chi,$$
where  $\chi: S \rightarrow \mathbb{C}$ is a non-zero multiplicative function and $A: S \backslash I_{\chi} \rightarrow \mathbb{C}$ is a non-zero additive function such that $\chi^*=\chi, A\circ \sigma =A$, $\rho: P_{\chi} \rightarrow \mathbb{C}$ is an even function with conditions (I) and (II) holding.
\item[(8)] $f=\dfrac{\chi+\chi^*}{2}$, and $g\in \left\lbrace \chi , \chi^*\right\rbrace$, where  $\chi: S \rightarrow \mathbb{C}$ is a non-zero multiplicative function  such that $\chi^*\neq\chi$.\\
Note that $f$ and $g$ are Abelian in each case.\par
Furthermore, if $S$ is a topological semigroup and $f,g\in C(S)$, then $\chi,\chi_1,\chi_2,\chi^*\in C(S)$, $A\in C(S\backslash I_{\chi})$ and $\rho \in C(P_{\chi})$.
\end{enumerate}
\label{Cor1}
\end{cor}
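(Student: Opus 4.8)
The plan is to read the corollary off Theorem \ref{Thm} by making the auxiliary function $\phi$ of its cases (6) and (7) explicit, using the structure theorem for the sine addition formula. Cases (1)--(5) and (8) of the corollary are word for word cases (1)--(5) and (8) of Theorem \ref{Thm}, so no argument is needed there; everything happens in the two cases carrying a non-zero even $\phi$ with
\[\phi(xy)=\phi(x)\chi(y)+\phi(y)\chi(x),\quad x,y\in S,\]
where $\chi$ is a fixed non-zero even multiplicative function.

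First I would note that this identity is precisely the sine addition formula \eqref{E1} with $\sigma=\mathrm{Id}$, the pair being $(f,g)=(\phi,\chi)$, so \cite[Theorem 3.1]{EB2} classifies $\phi$. Since $\phi\neq 0$ and $\chi\neq 0$ the two trivial types are excluded, and the decisive remark is that here the companion function is a single non-zero multiplicative function: among the remaining solution types the companion is a proper average $\tfrac{\chi_1+\chi_2}{2}$ (with $\chi_1\neq\chi_2$) or a scalar multiple $\tfrac12\chi'$, and neither of these is multiplicative (the average is multiplicative only if $\chi_1=\chi_2$, and $\tfrac12\chi'$ only if $\chi'=0$). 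Hence the pair $(\phi,\chi)$ falls into the branch whose companion is exactly $\chi$, so that
\[\phi=\chi A\ \text{on}\ S\backslash I_{\chi},\qquad \phi=0\ \text{on}\ I_{\chi}\backslash P_{\chi},\qquad \phi=\rho\ \text{on}\ P_{\chi},\]
with $A:S\backslash I_\chi\to\mathbb{C}$ additive and $\rho:P_\chi\to\mathbb{C}$ subject to conditions (I) and (II). Evenness of $\phi$, together with $\chi^*=\chi$ and Lemma \ref{invar}, then yields $A\circ\sigma=A$ and $\rho$ even.

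Next I would substitute this description into $f=\tfrac12\phi+\chi,\ g=\phi+\chi$ (case (6)) and into $f=\phi+\chi,\ g=\chi$ (case (7)), simplifying with $\chi=0$ on $I_\chi\supseteq P_\chi$. On $S\backslash I_\chi$ one recovers the expressions $\chi+\tfrac12\chi A,\ \chi+\chi A$ (respectively $\chi+\chi A,\ \chi$); on $I_\chi\backslash P_\chi$ both $\phi$ and $\chi$ vanish, giving $f=g=0$ (respectively $f=0$); on $P_\chi$ only the $\rho$-term survives, giving $f=\tfrac12\rho,\ g=\rho$ (respectively $f=\rho$). These are exactly the piecewise formulas of cases (6) and (7). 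The continuity statements are inherited: $\chi,\chi_1,\chi_2,\chi^*$ are continuous by Theorem \ref{Thm}, and $A,\rho$ follow from the restriction argument used in case (5) of Theorem \ref{P1}, once $\phi=f-\chi$ (respectively $2(f-\chi)$) is seen to be continuous.

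The step needing the most care is transporting conditions (I) and (II) from the $\phi$-equation to the pair $(f,g)$ and verifying that the three-region substitution is internally consistent; by contrast the exclusion of the multiplicative-average branches, though the conceptual heart of the reduction, is a one-line computation.
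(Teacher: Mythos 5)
Your proposal is correct and is exactly the route the paper takes: the paper states that Corollary \ref{Cor1} ``follows directly from Theorem \ref{Thm} and \cite[Theorem 3.1]{EB2}'', and your argument simply spells out that derivation --- applying the sine addition law classification to $\phi$, discarding the branches whose companion function ($0$, $\tfrac12\chi'$, or $\tfrac{\chi_1+\chi_2}{2}$ with $\chi_1\neq\chi_2$) cannot be a non-zero multiplicative function, transporting evenness to $A\circ\sigma=A$ and $\rho\circ\sigma=\rho$ via Lemma \ref{invar}, and substituting into cases (6) and (7). The details you fill in (including the one-line non-multiplicativity computation and the continuity of $\phi=f-\chi$ or $2(f-\chi)$) are exactly those the paper leaves implicit.
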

\section{Solutions of Equations \eqref{App1}}
In this section, we solve the functional equation \eqref{App1} on semigroups.
\begin{thm}
The solutions $f,g:S\rightarrow\mathbb{C}$ of equation \eqref{App1} can be listed as follows :
\begin{enumerate}
\item[(1)] $f=0$ and $g=0$.
\item[(2)] $f$ is any non-zero function such that $f=0$ on $S^2$, and $g=0$.
\item[(3)]$g$ is any non-zero function such that $g=0$ on $S^2$, and $f=0$.
\item[(4)] $f=\dfrac{\alpha\beta}{1-2\beta}\chi$ and $g=\beta\chi$, where $\chi$ is a non-zero even multiplicative function and $\beta  \in \mathbb{C}\backslash \left\lbrace  0, \dfrac{1}{2}\right\rbrace $ is a constant.
\item[(5)] $$f=\dfrac{-\alpha}{2}\left(\chi_1+\chi_2 \right) +\dfrac{-\alpha (c_1^2+2)}{4c_1}\left( \chi_1-\chi_2\right)\ \text{and}\  g=\dfrac{\chi_1+\chi_2}{2}+c_1\dfrac{\chi_1-\chi_2}{2},$$
where $\chi_1, \chi_2:S\rightarrow\mathbb{C}$ are two different, non-zero even multiplicative functions  and $c_1\in \mathbb{C}\backslash\lbrace0,-1,1\rbrace$ is a constant.
\item[(6)] $$\left\{\begin{array}{l}
g=\chi +\chi A \ \quad\text{on}\quad S\backslash I_{\chi}\\
g=0 \quad\quad\quad\quad\text{on}\quad I_{\chi}\backslash P_{\chi}\\
g=\rho \quad\quad\quad\quad\text{on}\quad P_{\chi}\end{array}\right.\quad\text{and}\quad f=\alpha \chi,$$
where  $\chi: S \rightarrow \mathbb{C}$ is a non-zero multiplicative function and $A: S \backslash I_{\chi} \rightarrow \mathbb{C}$ is a non-zero additive function such that $\chi^*=\chi, A\circ \sigma =A$, $\rho: P_{\chi} \rightarrow \mathbb{C}$ is an even function with conditions (I) and (II) holding.
\item[(7)] $$\left\{\begin{array}{l}
f=-2\alpha\chi A -\alpha\chi \quad\text{on}\quad S\backslash I_{\chi}\\
f=0\quad\quad\quad\quad\quad\quad\text{on}\quad I_{\chi}\backslash P_{\chi}\\
f=-2\alpha\rho \quad\quad\quad\quad\text{on}\quad P_{\chi}\end{array}\right.\quad\text{and}\quad g=\chi,$$
where  $\chi: S \rightarrow \mathbb{C}$ is a non-zero multiplicative function and $A: S \backslash I_{\chi} \rightarrow \mathbb{C}$ is a non-zero additive function such that $\chi^*=\chi, A\circ \sigma =A$, $\rho: P_{\chi} \rightarrow \mathbb{C}$ is an even function with conditions (I) and (II) holding.
\item[(8)] $$\left\{\begin{array}{l}
f=-\alpha\chi \\
g=\chi^*\end{array}\right.\quad \text{or}\quad \left\{\begin{array}{l}
f=-\alpha\chi^* \\
g=\chi\end{array},\right.$$
where  $\chi: S \rightarrow \mathbb{C}$ is a non-zero multiplicative function  such that $\chi^*\neq\chi$.\\
Note that $f$ and $g$ are Abelian in each case.\par
Furthermore, if $S$ is a topological semigroup and $f,g\in C(S)$, then \\$\chi,\chi_1,\chi_2,\chi^*\in C(S)$, $A\in C(S\backslash I_{\chi})$ and $\rho \in C(P_{\chi})$.
\end{enumerate}
\label{Appli1}
\end{thm}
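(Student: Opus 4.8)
The plan is to reduce \eqref{App1} to the already–solved equation \eqref{E2} by an invertible affine change of the unknowns that fixes $g$. Since $\alpha\neq 0$, I would keep $g$ and replace $f$ by a suitable combination of $f$ and $g$. Seeking a substitution of the form $f=aF+bg$ with $g$ unchanged, and demanding that the resulting identity for $F$ be exactly \eqref{E2}, the term $\alpha g(x\sigma(y))$ forces $b=\alpha$, after which the coefficient of $g(x)g(y)$ forces $a=-2\alpha$. This singles out
\[F:=\tfrac12\,g-\tfrac{1}{2\alpha}\,f,\qquad\text{equivalently}\qquad f=\alpha g-2\alpha F,\qquad G:=g.\]

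First I would prove the equivalence: a pair $(f,g)$ solves \eqref{App1} if and only if $(F,g)$ solves \eqref{E2}. This is a direct substitution. Writing $f=\alpha g-2\alpha F$ in \eqref{App1}, the two occurrences of $\alpha g(x\sigma(y))$ cancel and, after dividing by $-2\alpha$, one is left precisely with
\[F(x\sigma(y))=F(x)g(y)+F(y)g(x)-g(x)g(y),\]
which is \eqref{E2} for $(F,g)$. Since the map $(f,g)\mapsto(F,g)$ is linear with determinant $-\tfrac{1}{2\alpha}\neq 0$, it is a bijection between the solutions of \eqref{App1} and those of \eqref{E2}; no nondegeneracy hypotheses are needed.

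The heart of the argument is then simply to quote the complete solution of \eqref{E2} from Corollary \ref{Cor1} and push each of its eight cases through $f=\alpha g-2\alpha F$, $g=G$. For the multiplicative cases this is a one–line computation: e.g.\ from case (4) of Corollary \ref{Cor1}, $F=\tfrac{\beta^2}{2\beta-1}\chi$ and $g=\beta\chi$ give $f=\alpha\beta\chi-2\alpha\tfrac{\beta^2}{2\beta-1}\chi=\tfrac{\alpha\beta}{1-2\beta}\chi$, which is case (4) here; cases (1)--(3), (5) and (8) are handled the same way. In the two exceptional cases the substitution is applied pointwise on $S\backslash I_\chi$, $I_\chi\backslash P_\chi$ and $P_\chi$: using $\chi=0$ on $I_\chi$ one checks that $f=\alpha g-2\alpha F$ collapses to the piecewise formulas of cases (6) and (7) (for instance case (7) of Corollary \ref{Cor1}, with $F=\chi A+\chi$ on $S\backslash I_\chi$ and $F=\rho$ on $P_\chi$, yields $f=-2\alpha\chi A-\alpha\chi$ on $S\backslash I_\chi$ and $f=-2\alpha\rho$ on $P_\chi$). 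The converse is automatic from the bijection, and the Abelian assertion follows because $f$ is a linear combination of the Abelian functions $F$ and $g$ furnished by Corollary \ref{Cor1}.

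Finally, for the topological statement I would observe that if $f,g\in C(S)$ then $F=\tfrac12 g-\tfrac{1}{2\alpha}f\in C(S)$ as well, so the continuity conclusions of Corollary \ref{Cor1} apply verbatim and give $\chi,\chi_1,\chi_2,\chi^*\in C(S)$, $A\in C(S\backslash I_\chi)$ and $\rho\in C(P_\chi)$. The only genuine step is spotting the substitution; once it is in place the whole theorem is bookkeeping, and the main thing to watch is keeping the piecewise data $(\chi,A,\rho)$ consistent across the exceptional cases (6) and (7).
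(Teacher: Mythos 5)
Your reduction is exactly the paper's own: your $F=\frac{1}{2}g-\frac{1}{2\alpha}f$ is precisely the function $\frac{-1}{2}\left(\frac{1}{\alpha}f-g\right)$ that the paper shows to satisfy \eqref{E2} together with $g$, and both arguments then quote Corollary \ref{Cor1} and push its eight cases through the affine substitution. So there is no methodological difference. The problem lies in the cases you declared to be routine bookkeeping and did not write out: two of them do \emph{not} come out as stated in Theorem \ref{Appli1}. In case (6), Corollary \ref{Cor1} gives $F=\frac{1}{2}\chi A+\chi$ and $g=\chi+\chi A$ on $S\backslash I_{\chi}$, and $F=\frac{1}{2}\rho$, $g=\rho$ on $P_{\chi}$; your formula $f=\alpha g-2\alpha F$ then yields $f=-\alpha\chi$, not $f=\alpha\chi$. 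In case (5), with $F=\frac{\chi_1+\chi_2}{2}+\frac{c_1^2+1}{2c_1}\frac{\chi_1-\chi_2}{2}$ and $g=\frac{\chi_1+\chi_2}{2}+c_1\frac{\chi_1-\chi_2}{2}$, one gets
\begin{equation*}
f=\alpha g-2\alpha F=\dfrac{-\alpha}{2}\left(\chi_1+\chi_2\right)-\dfrac{\alpha}{2c_1}\left(\chi_1-\chi_2\right),
\end{equation*}
whose $(\chi_1-\chi_2)$-coefficient is $\frac{-2\alpha}{4c_1}$ rather than the stated $\frac{-\alpha(c_1^2+2)}{4c_1}$; the two agree only when $c_1=0$, which is excluded.

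A direct check confirms that your substitution, not the printed statement, produces the actual solutions. For instance, with $g=\chi+\chi A$ on $S\backslash I_{\chi}$ and $f=c\chi$, plugging into \eqref{App1} makes the terms involving $A$ cancel only if $c+\alpha=0$, forcing $f=-\alpha\chi$; similarly, writing $g=\frac{1+c_1}{2}\chi_1+\frac{1-c_1}{2}\chi_2$ and solving \eqref{App1} for $f=u\chi_1+v\chi_2$ gives $u=\frac{-\alpha(1+c_1)}{2c_1}$ and $v=\frac{\alpha(1-c_1)}{2c_1}$, which is the displayed $f$ above. In other words, the theorem as printed contains two errors, and the paper's own proof slips at exactly these two spots: in case (5) it evaluates $\frac{c_1^2+1}{2c_1}-\frac{c_1}{2}$ as $\frac{c_1^2+2}{4c_1}$ instead of $\frac{1}{2c_1}$, and in case (6) it mis-transcribes Corollary \ref{Cor1} and then loses a sign. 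Your method is sound and, carried out honestly, proves a corrected version of the theorem; but as a proof of the statement as given, the claim that cases (5) and (6) "collapse" to the printed formulas is false, and this is a genuine gap. You need to perform those two computations explicitly and record the corrected expressions $f=-\alpha\chi$ (case (6)) and $f=\frac{-\alpha}{2}(\chi_1+\chi_2)-\frac{\alpha}{2c_1}(\chi_1-\chi_2)$ (case (5)), rather than asserting agreement with the statement.
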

\begin{proof}
Let $(f,g)$ be a solution of equation \eqref{App1}, we define the function \\$F:=\dfrac{1}{\alpha}f-g$. For all $x,y\in S$, we have
\begin{equation}
F(x\sigma(y))=\dfrac{1}{\alpha}f(x\sigma(y))-g(x\sigma(y))=\dfrac{1}{\alpha}f(x)g(y)+\dfrac{1}{\alpha}f(y)g(x).
\label{Ap1}
\end{equation}
On the other hand, we have 
\begin{equation}
F(x)g(y)+F(y)g(x)=\dfrac{1}{\alpha}f(x)g(y)+\dfrac{1}{\alpha}f(y)g(x)-2g(x)g(y).
\label{Ap2}
\end{equation} 
By taking into account equation \eqref{Ap2}, equation \eqref{Ap1} becomes
\[F(x\sigma(y))=F(x)g(y)+F(y)g(x)+2g(x)g(y).\]
This implies that 
\begin{equation}
\dfrac{-1}{2}F(x\sigma(y))=\dfrac{-1}{2}F(x)g(y)+\dfrac{-1}{2}F(y)g(x)-g(x)(y).
\label{Ap3}
\end{equation}
This means that $\left( \dfrac{-1}{2}F,g\right) $ satisfies equation \eqref{E2}. According to Corollary \ref{Cor1}, we have the following possibilities :\\
(1) $\dfrac{-1}{2}F=0$ and $g=0$. Since $F=\dfrac{1}{\alpha}f-g$ we deduce that $f=0$. This is case (1).\\
(2) $\dfrac{-1}{2}F$ is any non-zero function such that $\dfrac{-1}{2}F=0$ on $S^2$, and $g=0$. 
This implies that $f$ is any non-zero function such that $f=0$ on $S^2$. This occurs in part (2).\\
(3) $\dfrac{-1}{2}F$ is any non-zero function such that $\dfrac{-1}{2}F=0$ on $S^2$, and $g=-F$. 
That is $f=g=0$ on $S^2$, and $\dfrac{-1}{\alpha}f=0$ on $S \backslash S^2$. So we deduce that $f=0$ and $g$ is any non-zero function such that $g=0$ on $S^2$. This is case (3).\\
(4) $\dfrac{-1}{2}F=\dfrac{\beta^2}{2\beta -1}\chi$ and $g=\beta\chi$, where $\chi$ is a non-zero multiplicative function such that $\chi^*=\chi$ and $\beta  \in \mathbb{C}\backslash \left\lbrace  0, \dfrac{1}{2}\right\rbrace $ is a constant. This implies that 
\[\dfrac{-1}{2\alpha}f=\dfrac{\beta^2}{2\beta -1}\chi-\dfrac{\beta}{2}\chi.\]
Then we get $f=\dfrac{\alpha\beta}{1-2\beta}\chi$. This occurs in case (4).\\
(5) $$\dfrac{-1}{2}F=\dfrac{\chi_1+\chi_2}{2}+\dfrac{c_1^2+1}{2c_1}\dfrac{\chi_1-\chi_2}{2}\quad\text{and}\quad g=\dfrac{\chi_1+\chi_2}{2}+c_1\dfrac{\chi_1-\chi_2}{2},$$
where $\chi_1, \chi_2:S\rightarrow\mathbb{C}$ are two different, non-zero even multiplicative functions  and $c_1\in \mathbb{C}\backslash\lbrace0,-1,1\rbrace$ is a constant. So we deduce that 
$$\dfrac{-1}{2\alpha}f=\dfrac{\chi_1+\chi_2}{4}+\dfrac{c_1^2+2}{4c_1}\dfrac{\chi_1-\chi_2}{2}.$$
That is $f=\dfrac{-\alpha}{2}\left(\chi_1+\chi_2 \right) +\dfrac{-\alpha (c_1^2+2)}{4c_1}\left( \chi_1-\chi_2\right) $. This is case (5).\\
(6) $$\dfrac{-1}{2}F=\left\{\begin{array}{l}
\chi A +\chi\quad\ \text{on}\quad S\backslash I_{\chi}\\
0\quad\quad\quad\quad\text{on}\quad I_{\chi}\backslash P_{\chi}\\
\dfrac{1}{2}\rho\ \quad\quad\quad\text{on}\quad P_{\chi}\end{array}\right.\quad\text{and}\quad g=\left\{\begin{array}{l}
\chi +\chi A \quad\text{on}\quad S\backslash I_{\chi}\\
0 \quad\quad\quad\quad\text{on}\quad I_{\chi}\backslash P_{\chi}\\
 \rho \quad\quad\quad\quad\text{on}\quad P_{\chi}\end{array},\right.$$
where  $\chi: S \rightarrow \mathbb{C}$ is a non-zero multiplicative function and $A: S \backslash I_{\chi} \rightarrow \mathbb{C}$ is a non-zero additive function such that $\chi^*=\chi, A\circ \sigma =A$, $\rho: P_{\chi} \rightarrow \mathbb{C}$ is an even function with conditions (I) and (II) holding. Then we get 
$$\dfrac{-1}{2\alpha}f=\left\{\begin{array}{l}
\dfrac{-1}{2}\chi \ \quad\text{on}\quad S\backslash I_{\chi}\\
0 \quad\quad\quad\text{on}\quad I_{\chi}\backslash P_{\chi}\\
0\quad\quad\quad\text{on}\quad P_{\chi}\end{array}.\right.$$
This implies that $f=\alpha \chi$. This occurs in case (6).\\
(7) $$\dfrac{-1}{2}F=\left\{\begin{array}{l}
\chi A +\chi\quad\ \text{on}\quad S\backslash I_{\chi}\\
0\quad\quad\quad\quad\text{on}\quad I_{\chi}\backslash P_{\chi}\\
\rho\quad\quad\quad\quad\text{on}\quad P_{\chi}\end{array}\right.\quad\text{and}\quad g=\chi,$$
where  $\chi: S \rightarrow \mathbb{C}$ is a non-zero multiplicative function and $A: S \backslash I_{\chi} \rightarrow \mathbb{C}$ is a non-zero additive function such that $\chi^*=\chi, A\circ \sigma =A$, $\rho: P_{\chi} \rightarrow \mathbb{C}$ is an even function with conditions (I) and (II) holding. That is 
$$\dfrac{-1}{2\alpha}f=\left\{\begin{array}{l}
\chi A +\dfrac{1}{2}\chi\quad\quad\text{on}\quad S\backslash I_{\chi}\\
0\quad\quad\quad\quad\quad\ \text{on}\quad I_{\chi}\backslash P_{\chi}\\
\rho\quad\quad\quad\quad\quad\ \text{on}\quad P_{\chi}\end{array}.\right.$$
So we deduce that 
$$f=\left\{\begin{array}{l}
-2\alpha\chi A -\alpha\chi\quad\text{on}\quad S\backslash I_{\chi}\\
0\quad\quad\quad\quad\quad\quad\ \text{on}\quad I_{\chi}\backslash P_{\chi}\\
-2\alpha\rho\quad\quad\quad\quad\ \text{on}\quad P_{\chi}\end{array}.\right.$$
This is part (7).\\
(8) $\dfrac{-1}{2}F=\dfrac{\chi+\chi^*}{2}$, and $g\in \left\lbrace \chi , \chi^*\right\rbrace$, where  $\chi: S \rightarrow \mathbb{C}$ is a non-zero multiplicative function  such that $\chi^*\neq\chi$. This implies that\\ 
$\dfrac{-1}{2\alpha}f\in \left\lbrace \dfrac{1}{2}\chi , \dfrac{1}{2}\chi^*\right\rbrace$. That is 
\[f\in \left\lbrace -\alpha\chi , -\alpha\chi^*\right\rbrace.\]
So we deduce that  
$$\left\{\begin{array}{l}
f=-\alpha\chi \\
g=\chi^*\end{array}\right.\quad \text{or}\quad \left\{\begin{array}{l}
f=-\alpha\chi^* \\
g=\chi\end{array}.\right.$$
This occurs in part (8).\par
 The converse and the continuity statements are easy to verify. This completes the proof of Theorem \ref{Appli1}.
\end{proof}
\section{Solutions of Equation \eqref{App2}}
In the following theorem we extend the results obtained in \cite[Proposition 4.1]{Z} on topological groups to the case of semigroups.
\begin{thm}
The solutions $f,g :S\rightarrow\mathbb{C}$ of the functional equation \eqref{App2} are the following pairs :
\begin{enumerate}
\item[(1)] $f=\alpha g$ and $g$ is arbitrary.
\item[(2)] $g$ is any non-zero function such that $g=0$ on $S^2$, and $f=0$.
\item[(3)] $f$ is any non-zero function such that $f=0$ on $S^2$, and $g=0$.
\item[(4)] $f$ is any non-zero function such that $f=0$ on $S^2$, and $g=\dfrac{c}{\alpha (1+c)}f$, 
where $c \in \mathbb{C}\backslash \lbrace 0,-1\rbrace$ is a constant.
\item[(5)] $$f=\alpha \dfrac{\chi +\chi ^*}{2}+\alpha(c_1+c_2)\dfrac{\chi -\chi ^*}{2}\ \ \text{and}\  \ g=\dfrac{\chi +\chi ^*}{2}+c_2\dfrac{\chi -\chi ^*}{2},$$
where $\chi  : S\rightarrow \mathbb{C}$ is a multiplicative function such that $\chi ^*\neq \chi$ and $c_1 \in \mathbb{C}\backslash  \lbrace0\rbrace,c_2\in \mathbb{C}$ are constants.
\item[(6)] $$
f=\left\{\begin{array}{l}
\alpha \chi(1+(1+c)A)\ \text { on } \  S \backslash I_{\chi} \\
0\ \quad\quad\quad \quad\quad\quad\quad\text { on } \  I_\chi \backslash P_{\chi} \\
\alpha (1+c)\rho\  \quad\quad\quad\ \text { on } \  P_\chi 
\end{array}\right.\quad\text{and}\quad g=\left\{\begin{array}{l}
\chi(1+c A) \ \ \text { on }\  S \backslash I_{\chi} \\
0 \  \quad\quad\quad\quad\ \text { on } \  I_\chi \backslash P_{\chi} \\
c\rho \  \quad\quad\quad\quad\text { on } \  P_\chi
\end{array},\right.
$$
where $c \in \mathbb{C}$ is a constant, $\chi: S \rightarrow \mathbb{C}$ is a non-zero multiplicative function and $A: S \backslash I_{\chi} \rightarrow \mathbb{C}$ is a non-zero additive function such that $\chi^{*}=\chi$, $A \circ \sigma=-A$, and $\rho: P_{\chi} \rightarrow \mathbb{C}$ is an odd function with conditions (I) and (II) holding.\\
Note that, off the exceptional case (1) $f$ and $g$ are Abelian.\par
Furthermore, if $S$ is a topological semigroup and $f,g\in C(S)$, then \\$\chi,\chi^*\in C(S)$, $A\in C(S\backslash I_{\chi})$ and $\rho \in C(P_{\chi})$.
\end{enumerate}
\label{Appli2}
\end{thm}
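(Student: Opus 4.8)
The plan is to reduce \eqref{App2} to the sine subtraction law
\[f(x\sigma(y))=f(x)g(y)-f(y)g(x),\quad x,y\in S,\]
whose solutions on semigroups are already known from \cite[Proposition 3.2]{Ase}, in the same spirit as the reduction of \eqref{App1} to \eqref{E2} carried out in Theorem \ref{Appli1}. Given a solution $(f,g)$ of \eqref{App2}, I would set $F:=f-\alpha g$. Subtracting $\alpha g(x\sigma(y))$ from both sides of \eqref{App2} gives $F(x\sigma(y))=f(x)g(y)-f(y)g(x)$, and writing $f=F+\alpha g$ on the right makes the two terms $\alpha g(x)g(y)$ and $\alpha g(y)g(x)$ cancel, so that
\[F(x\sigma(y))=F(x)g(y)-F(y)g(x),\quad x,y\in S.\]
Hence $(F,g)$ solves the sine subtraction law. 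Since conversely $f=F+\alpha g$, the correspondence $(f,g)\leftrightarrow(F,g)$ is a bijection between solutions of \eqref{App2} and solutions of the sine subtraction law, and it suffices to run through the classification of the latter and translate each item back through $f=F+\alpha g$.

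The second step is the back-substitution. When $F$ and $g$ are linearly dependent there are two possibilities: either $F=0$, which gives $f=\alpha g$ with $g$ arbitrary, namely case (1); or $F\neq0$ and $g=\lambda F$ for a scalar $\lambda$, in which case substituting into the sine subtraction law makes its right-hand side $\lambda\bigl(F(x)F(y)-F(y)F(x)\bigr)$ vanish identically and forces $F=0$ on $S^2$. Translating $f=(1+\alpha\lambda)F$ and $g=\lambda F$ back, the values $\lambda=0$, $\lambda=-1/\alpha$ and the generic $\lambda=c/\alpha$ yield cases (3), (2) and (4) respectively. When $F$ and $g$ are linearly independent, \cite[Proposition 3.2]{Ase} provides $F$ built either from the odd part $\frac{\chi-\chi^*}{2}$ of a multiplicative function $\chi$ with $\chi^*\neq\chi$, or from a non-zero odd additive function via $\chi A$ off $I_\chi$ together with $\rho$ on $P_\chi$, while $g$ carries the even part $\frac{\chi+\chi^*}{2}$ (respectively $\chi$). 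Forming $f=F+\alpha g$ and regrouping the coefficients of $\frac{\chi\pm\chi^*}{2}$ (respectively of $\chi$, $\chi A$ and $\rho$) yields the explicit formulas of cases (5) and (6), with the displayed constants $c_1,c_2,c$ read off from the regrouping.

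Finally, the converse is a direct verification: for each of the six forms one substitutes into \eqref{App2}, using $\chi^*=\chi$ (or the interchange of $\chi$ and $\chi^*$), the oddness of $A$ and $\rho$, and conditions (I)--(II). The continuity claims transfer through the linear identities $F=f-\alpha g$ and $f=F+\alpha g$, so that $f,g\in C(S)$ is equivalent to $F,g\in C(S)$, and one simply quotes the continuity part of \cite[Proposition 3.2]{Ase}. I expect the one-line reduction to be painless; the real work — and the place where bookkeeping errors creep in — is the back-substitution, namely checking that the bijection $(F,g)\mapsto(F+\alpha g,g)$ sends the solution classes of the sine subtraction law onto the six families with no overlaps, and that the reparametrizations of the free constants emerge exactly as displayed, especially in sorting the linearly dependent branch into cases (1)--(4).
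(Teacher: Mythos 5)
Your proposal is correct and follows essentially the same route as the paper: the paper also reduces \eqref{App2} to the sine subtraction law by the linear substitution $F:=\frac{1}{\alpha}f-g$ (yours is the rescaled $F=f-\alpha g$, which changes nothing structurally), invokes \cite[Proposition 3.2]{Ase}, and sorts the dependent branch $g=cF$ into cases (2)--(4) and the independent branch into cases (5)--(6) exactly as you describe. The only difference is bookkeeping: with the paper's normalization the constants and the functions $A,\rho$ carry over unchanged, whereas yours requires the harmless rescalings $A\mapsto A/\alpha$, $\rho\mapsto\rho/\alpha$, $c\mapsto\alpha c$ in case (6).
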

\begin{proof}
Let $f,g :S\rightarrow\mathbb{C}$ be a solution of the functional equation \eqref{App2}. If we put $F:=\dfrac{1}{\alpha}f-g$, then for all $x,y\in S$ we have 
\begin{equation}
F(x\sigma(y))=\dfrac{1}{\alpha}f(x\sigma(y))-g(x\sigma(y))=\dfrac{1}{\alpha}f(x)g(y)-\dfrac{1}{\alpha}f(y)g(x).
\label{ap1}
\end{equation}
On the other hand 
\begin{equation}
F(x)g(y)-F(y)g(x)=\dfrac{1}{\alpha}f(x)g(y)-g(x)g(y)-\dfrac{1}{\alpha}f(y)g(x)+g(x)g(y).
\label{ap2}
\end{equation}
Taking into account equation \eqref{ap2}, equation \eqref{ap1} becomes 
\[F(x\sigma(y))=F(x)g(y)-F(y)g(x),\quad \text{for all}\quad x,y\in S.\]
If $F=0$, the $g$ will be arbitrary and $f=\alpha g$. This is case (1). From now on we assume that $F\neq 0$. According to \cite[Proposition 3.2]{Ase}, the pairs $(F,g)$ falls into the categories :\\
(1) $F$ is any non-zero function such that $F=0$ on $S^2$, and $g=cF$, where $c \in \mathbb{C}$ is a constant. This implies that $f=g=0$ on $S^2$ and $(1+c)g=\dfrac{c}{\alpha}f$ on $S \backslash S^2$.
If $c=-1$, then we get that $f=0$ and $g$ is any non-zero function such that $g=0$ on $S^2$. This occurs in case (2). Now if $c=0$, then we get $g=0$ and $f$ is any non-zero function such that $f=0$ on $S^2$. This is case (3). Now if $c\notin \lbrace 0,-1\rbrace$, then $g=\dfrac{c}{\alpha (1+c)}f$ where $f$ is any non-zero function such that $f=0$ on $S^2$. This is part (4).\\
(2)  $$F=c_1\dfrac{\chi -\chi ^*}{2}\quad\text{and}\quad g=\dfrac{\chi +\chi ^*}{2}+c_2\dfrac{\chi -\chi ^*}{2},$$
where $\chi  : S\rightarrow \mathbb{C}$ is a multiplicative function such that $\chi ^*\neq \chi$ and $c_1 \in \mathbb{C}\backslash  \lbrace0\rbrace,c_2\in \mathbb{C}$ are constants. Since $F=\dfrac{1}{\alpha}f-g$, then we deduce that 
\[f=\alpha \dfrac{\chi +\chi ^*}{2}+\alpha(c_1+c_2)\dfrac{\chi -\chi ^*}{2}.\]
This is case (5).\\
(3) $$
F=\left\{\begin{array}{l}
\chi A\quad \text { on } \quad S \backslash I_{\chi} \\
0\quad \quad\text { on } \quad I_\chi \backslash P_{\chi} \\
\rho\quad\quad \text { on } \quad P_\chi 
\end{array}\right.\quad\text{and}\quad g=\left\{\begin{array}{l}
\chi(1+c A) \quad\  \text { on } \quad S \backslash I_{\chi} \\
0 \quad \quad\quad\quad\quad\text { on } \quad I_\chi \backslash P_{\chi} \\
c\rho  \quad\quad\quad\quad\ \  \text { on } \quad P_\chi  
\end{array},\right.
$$
where $c \in \mathbb{C}$ is a constant, $\chi: S \rightarrow \mathbb{C}$ is a non-zero multiplicative function and $A: S \backslash I_{\chi} \rightarrow \mathbb{C}$ is a non-zero additive function such that $\chi^{*}=\chi$, $A \circ \sigma=-A$, and $\rho: P_{\chi} \rightarrow \mathbb{C}$ is an odd function with conditions (I) and (II) holding. This implies that 
$$
f=\left\{\begin{array}{l}
\alpha \chi(1+(1+c)A)\quad \text { on } \quad S \backslash I_{\chi} \\
0\quad\quad\quad\quad\quad\quad\quad\quad \text { on } \quad I_\chi \backslash P_{\chi} \\
\alpha (1+c)\rho\quad\quad\quad\quad \ \text { on } \quad P_\chi
\end{array}.\right.
$$
This occurs in case (6).\par 
 Conversely we check by elementary computations that if $(f,g)$ have one of the forms (1)--(6), then $(f,g)$ is a solution of equation \eqref{App2}.\par 
 The topological statements are easily verified. This completes the proof of Theorem \ref{Appli2}.
\end{proof}
\section{Examples}
In this section we give two examples of solutions of \eqref{E3}, \eqref{E1}, \eqref{E2}, \eqref{App1} and \eqref{App2}. The first one is for a semigroup $S$ such that $S^2\neq S$ and the second one is where the semigroup $S$ satisfies $S^2=S$.
\begin{ex}
Let $S=(\mathbb{N}\backslash \lbrace 1\rbrace , .)$ and $p,q$ be two distinct primes, and define $\sigma :S \rightarrow S$ by $\sigma (x)=\widehat{x}$, where $\widehat{x}$ is obtained from $x$ by replacing every copy of $p$ in the prime factorization of $x$ by $q$, and vice versa. Let $I=p\mathbb{N}\cup q\mathbb{N}$ and define $\chi :S\rightarrow \mathbb{C}$ by 
$$\chi(x):=\left\{\begin{array}{l}
1\quad \text { for } \quad x\in S \backslash I \\
0\quad  \text { for } \quad x\in I
\end{array}.\right.$$
Then $\chi$ is multiplicative, $I_{\chi}=I$ and $\chi^*=\chi$. $S$ is not generated by its squares and $S^2=S\backslash \mathbb{P}$, where $\mathbb{P}$ denote the set of all prime numbers.  $P_{\chi}=I\backslash I^2$. Now let $c\in \mathbb{C}\backslash \lbrace 0\rbrace$ and define $\rho : P_{\chi}\rightarrow \mathbb{C}$ by 
$$\rho (x):=\left\{\begin{array}{l}
c\quad \text { for } \quad x=pw\quad\text{with}\quad w\in S \backslash I  \\
c\quad  \text { for } \quad x=qw\quad\text{with}\quad w\in S \backslash I 
\end{array},\right.$$
then $\rho\circ\sigma=\rho$. For each $p\in \mathbb{P}$ define $C_p:\mathbb{N}\rightarrow \mathbb{N}\backslash \lbrace 0\rbrace$ by \\
$C_p(x):=$ the number of copies of $p$ occurring in the prime factorisation of $x$. For each $x\in S$ let $P_x$ denote the set of prime factors of $x$. The additive functions on $S \backslash I$ have the form $A(x)=\sum\nolimits_{p\in {{P}_{x}}\backslash I}{A(p){{C}_{p}}(x)}$. So $A\circ \sigma=A$.\par 
 We get the solutions $f,g:S\rightarrow\mathbb{C}$ of equation \eqref{E3}, \eqref{E1}, \eqref{E2}, \eqref{App1} and equation \eqref{App2} by plugging the appropriate forms above into the formulas of Theorem \ref{TE3}, Theorem \ref{P1}, Theorem \ref{Thm}, Theorem \ref{Appli1} and Theorem \ref{Appli2} respectively.
\end{ex}
\begin{ex}
Let $S=\left]-1,1 \right[ \times\left] -1,1\right[ $ under coordinatwise multiplication and the usual topology, and let $\sigma : S\rightarrow S$ be the switching involution $\sigma (x,y)=\sigma(y,x)$. $S$ is not generated by its squares but $S^2=S$.\\
If $\chi$ is a non-zero continuous multiplicative function on $]-1,1[$, then $\chi$ have one of the forms
$$
{{\chi }_{0}}:=1,\ {{\chi }_{\alpha }}(x):=\left\{ \begin{matrix}
   {{\lvert x \rvert}^{\alpha }} & for\ x\ne 0  \\
   0 & for\ x=0  \\
\end{matrix}\ \ or\   \right.\ {{\chi }_{\alpha }}'(x):=\left\{ \begin{matrix}
   {{\lvert x \rvert}^{\alpha }}sgn (x) & for\ x\ne 0  \\
   0 & for\ x=0  \\
\end{matrix}\ , \right.
$$
for some $\alpha \in \mathbb{C}$ such that $\mathcal{R}(\alpha )>0$, where $\mathcal{R}(\alpha )$ denote the real part of $\alpha$.\\ The continuous multiplicative functions on $S$ have the form $\chi=\chi_1\otimes \chi_2$ where $\chi_1, \chi_2 \in C\left(]-1,1[ \right) $. The prime ideals on $S$ serving as null ideals of continuous multiplicative functions are $I_1=\lbrace 0\rbrace\times ]-1,1[$, $I_2=]-1,1[\times \lbrace 0\rbrace$, and $I_1\cup I_2$.\\
The continuous even multiplicative functions on $S$ are those of the form $\chi_0\otimes \chi_0$ , $\chi_{\alpha}\otimes \chi_{\alpha}$, or $\chi_{\alpha}'\otimes \chi_{\alpha}'$ with $\mathcal{R}(\alpha )>0$. Let $\chi_1=\chi_0\otimes \chi_0$ , $\chi_2=\chi_{\alpha}\otimes \chi_{\alpha}$, and $\chi_3=\chi_{\alpha}'\otimes \chi_{\alpha}'$, if $I_{\chi}$ denote the null ideal of a multiplicative function $\chi$, then $I_{\chi_2}=I_{\chi_3}=I_1\cup I_2$ and $I_{\chi_1}=\emptyset$.\\
The continuous additive functions on $S\backslash (I_1\cup I_2)$ have the form $A(x,y)=a\log\lvert x\rvert+b\log \lvert y\rvert$ for $xy\neq 0$. Such functions are even and non-zero if and only if $b=a\neq 0$.\par
The solutions $f, g\in C(S)$ such that $g\neq 0$ and $f\neq 0$ of equation \eqref{E3}, \eqref{E1}, \eqref{E2}, \eqref{App1} and equation \eqref{App2} are obtained by plugging the forms above into the formulas of Theorem \ref{TE3}, Theorem \ref{P1}, Theorem \ref{Thm}, Theorem \ref{Appli1} and Theorem \ref{Appli2} respectively.
\end{ex}
\subsection*{Acknowledgment}

\textbf{Statements and Declarations}\\
\\
\textbf{Author contributions} This work is done by the authors solely.\\
\\
\textbf{Availability and requirements} Not Applicable.\\
\\
\textbf{Competing Interests} The authors declare that they have no competing interest.\\
\\
\textbf{Funding} This research has no external funding.\\
\\
\textbf{Conflicts of Interest} The authors declare no conflict of interest.

\end{document}